\newtheorem{theorem}{Theorem}
\newtheorem{lemma}{Lemma}
\newtheorem{proposition}[theorem]{Proposition}
\newtheorem{remark}{Remark}
\let\e=\varepsilon
\let\d=\delta
\let\p=\partial
\let\O=\Omega
\numberwithin{equation}{section}
\let\hide\iffalse
\let\unhide\fi
\newcommand{\R}{\mathbb{R}}
\renewcommand{\P}{\mathbf{P}}
\newcommand{\be}{\begin{equation}}
\newcommand{\bm}{\begin{multline}}
\newcommand{\ee}{\end{equation}}
\newcommand{\dd}{\mathrm{d}}
\newcommand{\Bes}{\begin{eqnarray*}}
\newcommand{\Ees}{\end{eqnarray*}}
\newcommand{\Be}{\begin{equation} }
\newcommand{\Ee}{\end{equation}}
\def\p{\partial}
\def\O{\Omega}
\def\R{\mathbb{R}}
\def\d{\mathrm{d}}
\def\B{\begin{equation}}
\def\E{\end{equation}}
\def\BN{\begin{eqnarray*}}
\def\EN{\end{eqnarray*}}
\newcommand{\Red}{\textcolor{red}}
\newcommand{\Blue}{\textcolor{blue}}
\begin{document}
\title[Diffusive limit of Boltzmann around Rayleigh profile]{Diffusive limit of the Boltzmann equation around Rayleigh profile in the half space}

\author[H.-X. Chen]{Hongxu Chen}
\address[HXC]{School of Mathematical Sciences, Shenzhen University, Shenzhen, Guangdong 518060, China}
\email{hongxuchen.math@gmail.com}

\author[R.-J. Duan]{Renjun Duan}
\address[RJD]{Department of Mathematics, The Chinese University of Hong Kong, Shatin, N.T., Hong Kong.}
\email{rjduan@math.cuhk.edu.hk}
 
\date{\today}
\subjclass[2020]{35Q20, 76P05}


\keywords{Boltzmann equation, half-space, moving boundary, Rayleigh solution, diffusive limit}
 
\begin{abstract}
This paper concerns the diffusive limit of the time evolutionary Boltzmann equation in the half space $\mathbb{T}^2\times\mathbb{R}^+$ for a small Knudsen number $\varepsilon>0$. For boundary conditions in the normal direction, it involves diffuse reflection moving with a tangent velocity proportional to $\varepsilon$ on the wall, whereas the far field is described by a global Maxwellian with zero bulk velocity. The incompressible Navier-Stokes equations, as the corresponding formal fluid dynamic limit, admit a specific time-dependent shearing solution known as the Rayleigh profile, which accounts for the effect of the tangentially moving boundary on the flow at rest in the far field. Using the Hilbert expansion method, for well-prepared initial data we construct the Boltzmann solution around the Rayleigh profile without initial singularity over any finite time interval.
\end{abstract}

\maketitle

\vspace{-6mm}
\begin{center}
\small\it Dedicated to Professor Shih-Hsien Yu on the occasion of his 60th birthday
\end{center}

\thispagestyle{empty}
\bigskip
\section{Introduction}

\subsection{Problem setting}

Consider a viscous fluid initially at rest above an infinite flat plate. At time $t=0$, the plate is impulsively put in motion with a constant velocity parallel to its surface. This sudden acceleration generates a shear flow that propagates into the fluid, characterized by a thin layer near the plate where viscous forces dominate. This example of viscous fluid motion is classically known as the Rayleigh problem \cite{rayleigh1966scientific}.

Although the problem was investigated by Rayleigh using the viscous incompressible fluid, we are interested in the rarefied gas flow governed by the Boltzmann equation around the incompressible fluid for this problem in kinetic theory. Let the flow be confined in the half-space $\O:=\mathbb{T}^2\times \mathbb{R}^+$, where the tangent direction is spatially periodic, then we consider the nonlinear Boltzmann equation under the diffusive scaling with a small parameter $\e>0$,
\begin{align}
  \displaystyle \e \p_t F +     v\cdot \nabla_x F = \frac{1}{\e}Q(F,F).
\label{eqn_F}  
\end{align}
Here, the unknown $F=F(t,x,v)\geq 0$ stands for the velocity distribution function of gas particles with velocity $v=(v_1,v_2,v_3)\in \R^3$ at time $t> 0$ and position $x=(x_1,x_2,x_3)\in \O$, supplemented with initial condition,
\begin{align}\label{ic}
 F(0,x,v) = F_0(x,v).
\end{align}
The boundary condition is to be specified in \eqref{bc} later on. The small parameter $\e>0$ denotes the Knudsen number which is proportional to the mean free path. Here and to the end, we would omit the explicit dependence of $F$ on $\e$ and thus write $F$ instead of $F^\e$ for the sake of simplicity of notation. The Boltzmann collision term is a bilinear integral operator acting only on the velocity variable, and for the hard sphere model, it reads as 
\begin{equation*}
Q(F,G)(v)=\int_{\mathbb{R}^3}\int_{\mathbb{S}^2}|(v-u)\cdot \omega|[F(u')G(v')-F(u)G(v)]\,\d\omega \dd u,
\end{equation*}
where the velocity pairs $(v,u)$ and $(v',u')$ satisfy 
\begin{equation*}
  v'=v-[(v-u)\cdot\omega]\omega,\quad u'=u+[(v-u)\cdot\omega]\omega,\quad \omega\in\mathbb{S}^2,
\end{equation*}
that's the $\omega$-representation in terms of the conservation of momentum and energy for elastic collisions between molecules:
\begin{equation*}
  v+u=v'+u',\quad |v|^2+|u|^2=|v'|^2+|u'|^2.
\end{equation*}

For $t>0$, at the boundary $x_3=0$, the moving plate is modeled by the diffuse reflection boundary condition
\begin{align}
    &  F(t,x_1,x_2,0,v)|_{v_3>0} = M_w \int_{v_3'<0} F(t,x_1,x_2,0,v') |v_3'|\, \dd v'.
    \label{bc}
\end{align}
Here $M_w$ is the wall Maxwellian given by
\begin{align}
    &   M_w = \frac{1}{2\pi} \exp \Big(-\frac{|v-u_w|^2}{2}\Big), \label{wall_maxwell}
\end{align}
where $u_w$ is the plate's velocity, which is parallel to the surface. Without loss of generality, we assume that the plate is moving in the $x_1$ direction and the moving velocity is given by 
\begin{align}
    &   u_w = (\e u_b , 0, 0)=\e (u_b,0,0), \label{def.bv}
\end{align}
for $0<u_b\ll 1$. Note that $u_w$ is proportional to $\varepsilon$.

In the far field, the gas is not affected by the boundary tangent motion and hence remains in a rest uniform equilibrium state, which corresponds to the global Maxwellian
\begin{align}\label{def.mu}
    \mu(v) = \frac{1}{(2\pi)^{3/2}} e^{-\frac{|v|^2}{2}}.
\end{align}
This leads to the far-field condition that for $t>0$,
\begin{align}
    F(t,x,v)\to \mu(v) \text{ as } x_3\to \infty. 
    \label{far_field}
\end{align}

In this article, for the initial boundary value problem as specified above, we are interested in the hydrodynamic diffusive limit behavior of the Boltzmann equation when the physical boundary is moving on its own plane but the far field is at rest.

\subsection{Hilbert expansion}
We express $F$ using the classical Hilbert expansion,
\begin{align}
    & F = \mu +  \sqrt{\mu} (\e f_1 + \sum_{i=2}^4 \e^i(f_i + \mathfrak{f}_i)  + \e^5 f_5) + \e^{5/2+c_0} \sqrt{\mu} R, \label{expansion}
\end{align}
where $f_i$ are the corrections up to the fifth order and $R$ denotes the remainder. Here $\mathfrak{f}_i=\mathfrak{f}_i(\frac{x_3}{\e})$ is the boundary layer equation that vanishes in the interior. The extra constant $c_0$ will be chosen as $c_0=\frac{1}{4}$ to control the nonlinear operator, we refer to detailed discussion in Remark \ref{rmk:c0}. To the end, we also omit the dependence of $R$ on $\e$ for brevity.

\textbf{Interior expansion}.

We first derive the interior equation $f_i$. Plugging \eqref{expansion} into \eqref{eqn_F} and comparing the order, we have
\begin{align}
    &    \text{ Order of $\e^0$: } \mathcal{L}f_1 = 0, \label{order_0}\\
    &    \text{ Order of $\e^1$: } v\cdot \nabla_x f_1 + \mathcal{L} f_2 = \Gamma(f_1,f_1),  \label{order_1}\\
    & \vdots \notag\\
    &    \text{ Order of $\e^{i}$: } \p_t f_{i-1} + v\cdot \nabla_x f_i + \mathcal{L}f_{i+1}   =\sum_{j+k=i+1,\ 1\leq j,k \leq i}\Gamma(f_j,f_k)  \label{order_i} , \\
    & \vdots \notag\\
    & \text{ Order of $\e^5$: }  \mathbf{P}(\p_t f_4 + v\cdot \nabla_x f_5) = 0. \label{order_5}
\end{align}
Here, $\mathcal{L},\Gamma$ denote the linearized and nonlinear Boltzmann operator:
\begin{align}
\begin{cases}
        &\displaystyle \mathcal{L}f := -\frac{Q(\mu,\sqrt{\mu}f) + Q(\sqrt{\mu}f,\mu)}{\sqrt{\mu}}, \\
    &\displaystyle \Gamma(f,g) = \Gamma(g,f) := \frac{Q(\sqrt{\mu}f,\sqrt{\mu}g)+Q(\sqrt{\mu}g,\sqrt{\mu}f)}{2\sqrt{\mu}}.
\end{cases} \label{operator}
\end{align}
Note that $\mathcal{L}$ is a nonnegative definite self-adjoint operator on $L^2_v$ with the kernel $\ker (\mathcal{L})$ spanned by the orthogonal set $\{1,v,|v|^2-3\}\sqrt{\mu}$. Recall the classical Grad's decomposition $\mathcal{L}=\nu-\mathcal{K}$ with the multiplication operator $\nu=\nu(v)\sim \sqrt{1+|v|^2}$ and the integral operator $\mathcal{K}$. For later use, we denote $\mathbf{P}$ to be the macroscopic projection onto $\ker (\mathcal{L})$ in a standard way:
\begin{align*}
    & \mathbf{P}f := \Big(a + \mathbf{b}\cdot v + c \frac{|v|^2-3}{2} \Big) \sqrt{\mu(v)} , \notag \\
    & a = \int_{\mathbb{R}^3} \sqrt{\mu} f \dd v, \ \mathbf{b} = \int_{\mathbb{R}^3} v\sqrt{\mu} f\dd v , \ c = \int_{\mathbb{R}^3} \frac{|v|^2-3}{2}\sqrt{\mu} f \dd v. \notag
\end{align*}

In what follows we construct $f_i$. First of all, \eqref{order_0} implies that 
\begin{align*}
    &f_1 = \Big(\rho + u\cdot v + \theta \frac{|v|^2-3}{2}\Big) \sqrt{\mu}.
\end{align*}
From a standard computation, \eqref{order_1} leads to
\begin{align}\label{pf10}
    & \mathbf{P}(v\cdot \nabla_x f_1) = 0, 
\end{align}
or equivalently one has the following incompressibility condition and Boussinesq relation:
\begin{align*}
    & \nabla_x \cdot u = 0, \ \nabla_x(\rho+\theta) = 0.
\end{align*}
Since the wall temperature is identical to $1$, in this paper we do not consider the contribution of the energy $\theta$ and set $\rho = \theta = 0$. This leads to the expression of $f_1$:
\begin{align}
    &   f_1 = u\cdot v\sqrt{\mu}. \label{f1}
\end{align}

From \eqref{order_1} together with \eqref{pf10}, we further write
\begin{align}
\begin{cases}
 &\displaystyle (\mathbf{I}-\mathbf{P})f_2 = -\mathcal{L}^{-1}(v\cdot \nabla_x f_1) + \mathcal{L}^{-1}(\Gamma(f_1,f_1)),     \\
&\displaystyle  \mathbf{P}f_2 = \Big(\rho_2 + u_2\cdot v +  \theta_2 \frac{|v|^2-3}{2}\Big)\sqrt{\mu}.
\end{cases} 
\label{f2_derivation}
\end{align}
With this, $\mathbf{P}(\eqref{order_i})=0$ with $i=2$ leads to 
\begin{align*}
    & \p_t \rho = \nabla_x \cdot u_2 = 0
\end{align*}
and the incompressible Navier-Stokes equations (see \cite{BGL:91} for detail):
\begin{align}
\begin{cases}
        &\displaystyle  \p_t u + u\cdot \nabla_x u + \nabla_x p = \kappa \Delta u, \ \nabla\cdot u = 0,  \\   
    &\displaystyle   p = \rho_2 + \theta_2 - \frac{1}{3}|u|^2, 
\end{cases}\label{ins}
\end{align}
Here $\kappa>0$ is the viscosity constant given by
\begin{align}
\begin{cases}
        &  \langle A_{ij},\mathcal{L}^{-1}A_{kl}\rangle = \kappa \left( \delta_{ik}\delta_{jl} + \delta_{il}\delta_{jk}-\frac{2}{3}\delta_{ij}\delta_{kl} \right),  \\
    & A_{ij}:= v_iv_j -\delta_{ij}\frac{|v|^2}{3}. 
\end{cases}\label{Aij}
\end{align}

We are interested in the Rayleigh profile of \eqref{ins}. For this purpose, we specify the boundary condition and initial condition for $f_1$ here, the conditions for $f_i,i>1$ will be specified later in the boundary layer expansion. We impose a tangent moving speed on the boundary for $u$: 
\begin{align}
u(x_1,x_2,0) = (u_b,0,0). \notag 
\end{align} 
Moreover, instead of the instant moving of the boundary in the classical Rayleigh problem, we impose the well-prepared smooth initial condition as 
$$
u(t,x)|_{t=0} = \Big(\frac{2u_b}{\sqrt{\pi}} \int^\infty_{\frac{x_3}{\sqrt{4\kappa \delta}}} e^{-r^2}\dd r,0,0 \Big)
$$ 
to avoid singularity at $t=0$, where $\delta>0$ is a fixed constant throughout the paper. Therefore, the initial boundary value problem on $u$ is given by
\begin{align}
\begin{cases}
      &\displaystyle  \p_t u + u\cdot \nabla_x u + \nabla_x p = \kappa \Delta u, \ \ \nabla_x \cdot u = 0,\\
      & \displaystyle \nabla_x p = \nabla_x (\rho_2+\theta_2- \frac{1}{3}|u|^2),  \\[2mm]
    & \displaystyle  \ u(t,x)|_{t=0} = \Big(\frac{2u_b}{\sqrt{\pi}} \int^\infty_{\frac{x_3}{\sqrt{4\kappa \delta}}} e^{-r^2}  \dd r  ,0,0 \Big), \\     & \displaystyle u(t,x)|_{x_3=0} = (u_b,0,0) .  
\end{cases} \label{INS_full}
\end{align}
To fix a specific shearing solution of \eqref{INS_full}, we first take the velocity field as
\begin{align}
    & u = (u_p,0,0), \ \ u_p:= \frac{2u_b}{\sqrt{\pi}} \int^{\infty}_{\frac{x_3}{\sqrt{4\kappa (t+\delta)}}} e^{-r^2} \dd r, \label{u_p}
\end{align}
such that $u_p=u_p(t,x_3)$ satisfies the heat equation
$$
\p_tu_p=\kappa \p_{x_3x_3}u_p.
$$
This provides a complete description of $f_1$ in \eqref{f1}. To solve \eqref{order_i} for $O(\e^2)$, we set
\begin{align*}
    &    (\mathbf{I}-\mathbf{P})f_3 = -\mathcal{L}^{-1}(\p_t f_{1} + v\cdot \nabla_x f_{2}) + \mathcal{L}^{-1}(2\Gamma(f_1,f_2)).
\end{align*}

Then we derive $f_i$ using \eqref{order_i} in an inductive way. Assume \eqref{order_i} is solved for $O(\e^{i}),i\geq 2$, we solve $f_{i+1}$ using \eqref{order_i} as
\begin{align}
   & \begin{cases}
        &   (\mathbf{I}-\mathbf{P})f_{i+1} = \mathcal{L}^{-1}(-\p_t f_{i-1} - v\cdot \nabla_x f_{i} +\sum_{j+k=i+1}\Gamma(f_j,f_k)), \\
        & \mathbf{P}f_{i+1} = \Big(\rho_{i+1} + u_{i+1}\cdot v + \theta_{i+1} \frac{|v|^2-3}{2}\Big). \sqrt{\mu}
    \end{cases}   \label{f_i}
\end{align}
Plugging this into $\mathbf{P}(\eqref{order_i})=0$ at order $O(\e^{i+1})$, we derive the linearized Navier Stokes Fourier system(see \cite{guo2006boltzmann} for derivation)
\begin{align}
    &  \begin{cases}
        & \p_t P_0u_{i} + u\cdot \nabla_x P_0u_{i} + P_0u_{i} \cdot \nabla_x u + \nabla_x p_{i}  = \kappa P_0\Delta u_i + S_u^{i}, \  \\
        &\nabla_x \cdot (I-P_0)u_{i} = \p_t \rho_{i-1}, , \ \  p_{i} = \rho_{i+1}+\theta_{i+1} - \frac{2}{3}u_{i}\cdot u, \\
        & \p_t \theta_{i} + u\cdot \nabla_x \theta_{i} = \eta \Delta \theta_{i} + S_\theta^i, \\
        & \p_t \rho_{i} = \nabla_x \cdot u_{i+1}.
    \end{cases} \label{u_theta_i_eqn}
\end{align}
Here $P_0 u_i$ is the divergence free projection of $u_i$. The source term are given by \eqref{S_u} and \eqref{S_theta}.

Once $u_i,\theta_i$ are solved in \eqref{u_theta_i_eqn} with specified initial condition and boundary condition, with $\rho_i+\theta_i = \frac{2-\delta_{i-1}^1}{3}u_{i-1}\cdot u$ from \eqref{INS_full} and \eqref{u_theta_i_eqn}, we can determine $\rho_i$ as
\begin{align}
\rho_i = p_{i-1}+\frac{2-\delta_{i-1}^1}{3}u_{i-1}\cdot u - \theta_i, 
\label{r2t2}
\end{align}

\eqref{order_5} is solved by the same way as \eqref{f_i} with $i=4$. The macroscopic components of $f_5$ can be chosen with some freedoms. To guarantee the condition $\p_t \rho_4 = \nabla_x \cdot u_5$ and $\rho_5 + \theta_5 = p_4 + \frac{2}{3}u_4\cdot u$, we set 
\begin{align}
    &  \theta_5 = 0 , \ \ \rho_5 = p_4 + \frac{2}{3}u_4\cdot u, \ \  u_5 = \nabla_x  \Delta^{-1}(\p_t \rho_4). \label{f5}
\end{align}

\textbf{Boundary layer expansion.}

Next, we focus on the boundary condition. We expand the wall Maxwellian as
\begin{align}
    &   \frac{M_w}{\sqrt{\mu}} = \frac{M_w - c_\mu \mu + c_\mu \mu}{\sqrt{\mu}}   = \sum_{i=0}^4   c_\mu \e^i m_i \sqrt{\mu} + O(\e^5)\langle v\rangle^5\sqrt{\mu(v)}. \label{wall_expand}
\end{align}
Here $m_i$ is some $i$-order polynomial in $v_1$:
\begin{align*}
    &  m_0 = 1, \ m_1 = u_b v_1 , \ m_2 = \frac{|u_b v_1|^2 - |u_b|^2}{2}, \cdots.
\end{align*}

Plugging \eqref{expansion} into the boundary condition \eqref{bc}, we have
\begin{align*}
    & \e f_1 + \sum_{i=2}^4 \e^i (f_i +  \mathfrak{f}_i)+ \e^5 f_5 +  \e^{5/2+c_0} R \\
    & = \frac{M_w}{\sqrt{\mu}} \int_{v_3' > 0}\sqrt{\mu} \big[\e f_1 + \sum_{i=2}^4 \e^i (f_i +  \mathfrak{f}_i)+ \e^5 f_5 +  \e^{5/2+c_0} R \big] v_3' \dd v' +   \frac{\frac{1}{c_\mu}M_w - \mu}{\sqrt{\mu}}.
\end{align*}
Denote
\begin{align*}
    & P_\gamma f = c_\mu\sqrt{\mu}\int_{v_3'<0} \sqrt{\mu} f |v_3'| \dd v'.
\end{align*}

Using the expansion \eqref{wall_expand} and comparing the order at $O(\e)$, we obtain the boundary condition for $f_1$:
\begin{align*}
    &   f_1  =  P_\gamma f_1 + m_1 \sqrt{\mu}.
\end{align*}
This leads to the boundary condition as in \eqref{INS_full}.
\begin{align}
    &  u |_{x_3=0} = (u_b,0,0) \notag.
\end{align}

Next, the boundary condition for the order $O(\e^i),2\leq i\leq 4$ reads
\begin{align}
    &   [f_i + \mathfrak{f}_i]|_{\gamma_-} = P_\gamma(f_i + \mathfrak{f}_i) + m_i \sqrt{\mu}. \label{bc_2}
\end{align}
The boundary layer $\mathfrak{f}_i$ needs to correct the mismatch from $(\mathbf{I}-\mathbf{P})f_i$ and $m_i \sqrt{\mu}$. We denote
\begin{align}
    & \mathfrak{h}_i := P_\gamma((\mathbf{I}-\mathbf{P})f_i) - (\mathbf{I}-\mathbf{P})f_i + m_i \sqrt{\mu}. \label{hi}
\end{align}
We note that $\mathfrak{h}_i$ is determined by $u_b$ and $(\mathbf{I}-\mathbf{P})f_i$, then by \eqref{f_i}, it is determined by $u_b$ and $f_{i-1},f_{i-2},\cdots.. f_1$.

WLOG, we set $\mathfrak{f}_i = 0$ for $i\leq 1$. Denote $\xi = \frac{x_3}{\e}$, then $\mathfrak{f}_i$ should satisfy
\begin{align}
    &  \begin{cases}
        v_3 \frac{\p \mathfrak{f}_i}{\p \xi} + \mathcal{L}\mathfrak{f}_i &= -\p_t \mathfrak{f}_{i-2} - [v_1\p_{x_1} + v_2\p_{x_2}] \mathfrak{f}_{i-1}  \\
        &+ \sum_{j+k=i, \ 1\leq j,k\leq i-1}[\Gamma(f_j+\mathfrak{f}_j,\mathfrak{f}_k) + \Gamma(\mathfrak{f}_j,f_k)], \\
        \mathfrak{f}_i(0,v)& = \mathfrak{h}_i - \mathfrak{M}^i_\infty(v), \  \ \lim_{\xi\to\infty}\mathfrak{f}_i(\xi,v) = 0 
    \end{cases} \label{f_i_bl}
\end{align}
Here 
\begin{equation}
\mathfrak{M}^i_\infty(v) : = \big[ \rho^i_\infty + u^i_\infty  \cdot v + \theta^i_\infty \frac{|v|^2-3}{2} \big] \sqrt{\mu(v)}\in \ker \mathcal{L}  
\end{equation}
with $u^i_{\infty,3} = 0$, and $\mathfrak{M}^i_\infty(v)$ can be determined by solving the Milne's problem as \cite{bardos1986milne}.

To cancel the $v_i, |v|^2$ component in \eqref{bc_2}, we need to impose the following boundary condition to $u_i$ and $\theta_i$:
\begin{align}
    &   u_i|_{x_3=0} = u^i_\infty, \ \ \   \theta_i|_{x_3= 0} = \theta^i_\infty. \label{u_theta_i_bc}
\end{align}
The boundary condition for $\rho_i$ enjoys extra freedom
\begin{align}
    & \rho_i |_{x_3=0} = \rho_\infty^i + \varrho_i. \label{rho_i_bc}
\end{align}
Here $\varrho_i$ is chosen to satisfy the Boussinesq relation \eqref{r2t2}. Such a choice of boundary condition guarantees \eqref{bc_2}, see \cite{wu2022hydrodynamic}.

In summary, by \eqref{hi} and \eqref{f_i_bl}, we conclude that $\mathfrak{f}_i$ and the boundary condition \eqref{u_theta_i_bc}, \eqref{rho_i_bc} are determined by $f_1,\cdots,f_{i-1}$ and $\mathfrak{f}_{i-1},\cdots,\mathfrak{f}_2$. Inductively, $\mathfrak{f}_i$ and \eqref{u_theta_i_bc}, \eqref{rho_i_bc} are determined by $f_1,\cdots,f_{i-1}$. 

Now the boundary condition of the linearized NSF system \eqref{u_theta_i_eqn} are determined, since the source term \eqref{S_theta}, \eqref{S_u} are again determined by $f_1,\cdots,f_{i-1}$, we conclude that $f_i$ are determined by $f_1,\cdots,f_{i-1}$ and the initial condition $f_{i,0}$. 

Finally, to avoid singularity, we impose the following compatibility condition:
\begin{align}
    & (\rho_i, u_{i}, \theta_i)|_{t=0,x_3=0} = (\rho^i_\infty, u^i_\infty,\theta^i_\infty)|_{t=0}, \ \ 2\leq i\leq 4. \label{compatibility}
\end{align}

\textbf{Remainder formulation.}

Now $f_i,\mathfrak{f}_i$ have been completely specified in the Hilbert expansion \eqref{expansion}. To justify the hydrodynamic limit, the main goal is to obtain proper estimates of the remainder term. We focus on the formulation of the remainder term $R$ in \eqref{expansion}. From \eqref{far_field}, we have the following condition at the far field:
\begin{align*}
    & R(x) \to 0 \text{ as }|x|\to \infty.
\end{align*}

Plugging the expansion \eqref{expansion} into the original equation \eqref{eqn_F}, we obtain the following equation for $R$:
\begin{align}
    & \p_t R + \frac{1}{\e}v\cdot \nabla_x R + \frac{1}{\e^2} \mathcal{L}R = h + \frac{1}{\e}\tilde{\mathcal{L}}R + \e^{1/2+c_0} \Gamma(R,R). \label{remainder_eqn}
\end{align}
In \eqref{remainder_eqn}, we have denoted the linear term
\begin{align}
    & \tilde{\mathcal{L}}R := 2\Gamma(f_1 +  \sum_{i=2}^4 \e^{i-1}(f_i + \mathfrak{f}_i) + \e^4 f_5 , R) .  \label{L_tilde}
\end{align}

$h$ denotes the inhomogeneous source term. By \eqref{order_0}, \eqref{order_1}, \eqref{order_i} and \eqref{order_5}, we have the following expression for $h$:
\begin{align}
   h &   = \e^{1/2-c_0}\Big(- \p_t\mathfrak{f}_3 - \e \p_t \mathfrak{f}_4 - (v_1\p_{x_1}+v_2\p_{x_2})\mathfrak{f}_4  \Big) \notag \\
   & + \e^{1/2-c_0} \sum_{j+k\geq 5, \ 1\leq j,k} \e^{j+k-5}[\Gamma(f_j+\mathfrak{f}_j,\mathfrak{f}_k) + \Gamma(\mathfrak{f}_j,f_k) ]   \notag\\
   & + \e^{1/2-c_0}\sum_{j+k\geq  6, 1\leq j,k}\e^{j+k-5}\Gamma(f_j,f_k) - \e^{3/2}(\p_t f_4 + v\cdot \nabla_x f_5+\e\p_t f_5)  .\label{h_def}
\end{align}

\begin{remark}\label{rmk:c0}
The purpose of introducing an extra $c_0$ in \eqref{expansion} is to provide sufficient control for the nonlinear term in \eqref{remainder_eqn}, while maintaining the positive exponent in \eqref{h_def}.
\end{remark}

For the boundary condition, from \eqref{bc_2} and \eqref{wall_expand}, we obtain:
\begin{align}
    & R(t,x_1,x_2,0,v)|_{v_3>0} \notag \\
    &= P_\gamma R + \frac{M_w-c_\mu \mu}{\sqrt{\mu}} \int_{v_3'<0} R(t,x_1,x_2,0,v') \sqrt{\mu(v')} |v_3'| \dd v' + r ,    \label{R_bc}
\end{align}
where
\begin{align}
    &  r := \e^{5/2-c_0}\frac{M_w}{\sqrt{\mu}}\int_{v_3'<0} f_5\sqrt{\mu(v')} |v_3'| \dd v' - \e^{5/2-c_0}f_5 + O(\e^{5/2-c_0})\langle v\rangle^5\sqrt{\mu(v)}. \label{r_bc}
\end{align}

\subsection{Motivation}

The Rayleigh problem was initially studied in the context of incompressible fluid in \cite{rayleigh1966scientific}. Recently, Maekawa studied in \cite{maekawa2021gevrey} the local in time stability of the incompressible Navier-Stokes equations around the Rayleigh profile with initial singularity $\delta=0$ and vanishing viscosity $\kappa\to 0$. In his work, two-dimensional half-space domain $\mathbb{T}\times \R^+ $ is considered and the Gevrey regularity is imposed on the tangential direction. The current work is partially inspired by \cite{maekawa2021gevrey}; see Remark \ref{rmk2} and Remark \ref{rmk:higher_order_expansion} for more discussions. In particular, we could expect an analogous result for the kinetic Boltzmann equation in a similar framework. 

For rarefied gas, the Rayleigh problem can be studied using the kinetic equation; we refer the earlier investigation to Sone \cite{sone1964kinetic}. Recently, Kuo constructed a local-in-time solution to the Boltzmann equation and estimated the flow velocity in the Rayleigh profile in \cite{kuo2010initial}. In subsequent work \cite{kuo2017asymptotic}, Kuo analyzed the long-time behavior of the linearized Boltzmann equation around the Rayleigh profile in the hydrodynamic limit through asymptotic analysis. Building on these studies, in this paper we rigorously justify the hydrodynamic limit of the full Boltzmann equation near the Rayleigh profile by employing the recent $L^2-L^\infty$ argument with high order Hibert expansion. The constraints of our results are discussed in Remarks \ref{rmk1} and \ref{rmk2}. For instance, we have removed the initial singularity for which more subtle estimates are needed. 

The $L^2-L^\infty$ framework, introduced by Guo in \cite{G}, provides an effective approach to handling boundary value problems for the Boltzmann equation. This fundamental work has led to substantial developments in kinetic theory, including the stationary problem \cite{EGKM} and the regularity problem \cite{GKTT}. Esposito-Guo-Kim-Marra applied this method in \cite{EGKM2} to study the Navier-Stokes-Fourier limit in bounded domains, where an $L^6$ estimate was introduced to address challenges arising in the small Knudsen number regime. Further applications include the incompressible Euler limit, as explored in \cite{jang2021incompressible} and \cite{CKJ}.

The Rayleigh problem is a classic example of shear flow and can be viewed as a limiting case of Couette flow. In Couette flow, a gas is confined between two parallel plates at $x_3=\pm 1$ that are moving in opposite directions. When the upper plate is moved to infinity $(x_3\to \infty)$, the Couette flow reduces to the Rayleigh problem. In fluid dynamics, the Couette flow problem is proven to exhibit a stability mechanism, see the survey \cite{bedrossian2019stability} and references therein. In kinetic theory, Duan et al. \cite{duan2023boltzmann} \cite{duan20243d} have constructed the stationary solution and global dynamical stability of the Boltzmann equation in a Couette flow profile when the shear strength is small enough. Also see the infinite layer problem in \cite{chen2025global}. In contrast, the heat equation solution \eqref{u_p} is self-similar, implying that no stationary solution exists for the Rayleigh problem. A more detailed informal discussion of stationary solutions from the kinetic equation perspective can be found in Remark \ref{rmk1} and Appendix \ref{sec:stationary}.

\subsection{Main result}

Denote an exponential velocity weight,
\begin{align}
    & w(v) := e^{\beta |v|^2}, \ 0<\beta \leq \frac{1}{8}.  \label{w_weight}
\end{align}

We state the main result of this paper. We refer the norms notation to Section \ref{sec:notation}. 

\begin{theorem}\label{thm:well-posedness}
Let $c_0=\frac{1}{4}$. Let the non-singular Rayleigh profile $u=(u_p,0,0)$ be defined in \eqref{u_p} and $f_1,f_i$ be given in \eqref{f1} and \eqref{f_i}, respectively. Suppose the initial condition of \eqref{eqn_F} is given in the well-prepared form
\begin{align*}
    & F_0(x,v) = \mu + \sqrt{\mu}\Big(\e f_{1,0} + \sum_{i=2}^4 \e^i(f_{i,0}+\mathfrak{f}_{i,0}) +\e^5 f_{5,0} + \e^{5/2+c_0} R_0\Big)\geq 0, 
\end{align*}
where $f_{i,0},\mathfrak{f}_{i,0}$ are the initial conditions of $f_i$ and $\mathfrak{f}_i$, respectively. We assume $(\rho_{i,0},u_{i,0},\theta_{i,0})$ satisfies the compatibility condition \eqref{compatibility} and the boundedness
\begin{align}
    &  \sum_{i=2}^5 \sum_{j+k\leq 8-i} \Vert \p_t^j \nabla_{x}^k(\rho_{i,0},u_{i,0},\theta_{i,0})\Vert_{L^2_{x,v}}  < \infty. \label{f_i_intial}
\end{align}
Then for any given time $T>0$ and given constant $C_0>0$, there exists $\e_0=\e_0(T,C_0) \ll 1$ such that if $\e < \e_0$, if $u_b>0$ satisfies
$u_bT^{1/2} \ll 1$, and if $R_0$ satisfies
\begin{align}
    & \Vert R_0\Vert_{L^2_{x,v}} + \Vert \e^{3/2}wR_0\Vert_{L^\infty_{x,v}}     < C_0 ,  \label{R_initial}
\end{align}
then there exists a unique solution
\begin{equation*}
F=\mu +  \sqrt{\mu} (\e f_1 + \sum_{i=2}^4 \e^i(f_i + \mathfrak{f}_i) + \e^5 f_5 + \e^{5/2+c_0} R)    \geq 0
\end{equation*}
on $[0,T]$ to the initial-boundary value problem \eqref{eqn_F}, \eqref{ic}, \eqref{bc}. Moreover, it holds
\begin{align}
    &\Vert R\Vert_{L^\infty_t L^2_{x,v}} + \Vert \e^{3/2}wR\Vert_{L^\infty_{t,x,v}} + \e^{-1/2}|(I-P_\gamma)R|_{L^2_{t,\gamma_+}} \notag\\
    &+ \e^{-1}\Vert (\mathbf{I}-\mathbf{P})R\Vert_{L^2_{t,x,\nu}} < C_1C_0, \label{R_est}
\end{align}
for any $0\leq t\leq T$, where $C_1$ is a constant independent of $T$. Additionally, we have the following $L^2_{x,v}$ and $L^\infty_{x,v}$-convergence in $\e$:
\begin{align}
    &  \sup_{0\leq t\leq T}\Big\Vert \frac{F(t)-\mu}{\e\sqrt{\mu}}- f_1(t) \Big\Vert_{L^2_{x,v}} \lesssim \e, \notag\\
    & \sup_{0\leq t\leq T}\Big\Vert \frac{F(t)-\mu}{\e\sqrt{\mu}}- f_1(t) \Big\Vert_{L^\infty_{x,v}}  \lesssim \e^{\frac{1}{4}}.  \label{convergence}
\end{align}

\end{theorem}

Here we give some remarks about the above result.

\begin{remark}\label{rmk1}
Our theorem does not require the smallness of the initial condition to the remainder, as in \eqref{R_initial}. The well-posedness of the solution is valid up to time $T$ given the condition that $|u_b|T^{1/2}<\delta_0$ and $\e < \e_0(T,C_0)\ll 1$. 

This first constraint originates from the leading homogeneous term $\Gamma(f_1,R)$ in \eqref{L_tilde}. While the non-homogeneous term in \eqref{r_bc} can be controlled with $\e^{5/2-c_0}$, the homogeneous term $\Gamma(f_1,R)$ lacks decay-in-time properties from the nature of the heat solution in \eqref{u_p}. Therefore, such a constraint arises solely from the boundary movement. Furthermore, our problem possesses no non-trivial stationary solutions. This is because the heat equation \eqref{u_p} admits only self-similar solutions. We further discuss the stationary aspect of this problem by examining stationary solutions to a one-dimensional problem in Appendix \ref{sec:stationary}. Consequently, we expect that the Rayleigh problem should be studied in a dynamical setting, and it remains a challenging problem to investigate the long-time behavior within a self-similar framework.

The second constraint originates from the lack of macroscopic dissipation in the domain $\mathbb{T}^2\times \mathbb{R}^+$. Through high-order Hilbert expansion, the non-homogeneous term $h$ \eqref{h_def} and the nonlinear operator $\Gamma(R,R)$ \eqref{remainder_eqn} have sufficient power of $\e$. We may choose $\e$ to be small enough to control the growth in time from the $L^2_{x,v}$ energy estimate and the nonlinear growth from $\Gamma(R,R)$. In consequence, the smallness condition is not required in \eqref{R_initial}. We refer to the detailed argument in Section \ref{sec:proof_thm}.

\end{remark}

\begin{remark}\label{rmk2}
We avoid singularity at $t=0$ by imposing an initial perturbation $\delta$ in \eqref{u_p}. In the rigorous justification of the hydrodynamic limit, due to the Hilbert expansion \eqref{order_0} - \eqref{order_i}, the remainder estimate to \eqref{remainder_eqn} requires regularity estimates for the background fluid solution. However, these estimates introduce an initial singularity: $\p_{x_3} u_p \sim \frac{1}{\sqrt{t}}$ when $\delta=0$. Such difficulty does not arise if one studies the initial boundary value problem when Knudsen number $\e=1$ in \eqref{eqn_F}, since regularity estimate is not needed, see also \cite{kuo2010initial}.

In \cite{maekawa2021gevrey}, Maekawa established the stability of the Rayleigh boundary layer in the vanishing viscosity limit for Gevrey-class solutions with a singularity at 
$t=0$. While our work constructs kinetic solutions in Sobolev spaces, we expect that Gevrey regularity may be necessary to fully resolve the initial singularity. We leave the investigation of this issue, along with the vanishing viscosity problem, for future work.
\end{remark}

\begin{remark}\label{rmk:higher_order_expansion}
High regularity are required for the initial condition \eqref{f_i_intial} since the derivative of $f_i$ appears while solving \eqref{u_theta_i_eqn} and \eqref{f_i}. We refer to a more detailed discussion in Remark \ref{rmk:fluid_estimate}.
\end{remark}

\subsection{Outline}
In Section \ref{sec:prelim}, we list several properties of the collision operator and the background fluid solution. In Section \ref{sec:linear}, we establish the $L^\infty$ estimates for the linear problem. In Section \ref{sec:nonlinear}, we employ the linear estimate to study the nonlinear remainder equation of $R$ and conclude Theorem \ref{thm:well-posedness} in a classical perturbation framework. In Appendix \ref{sec:stationary}, we discuss the possibility of solving the steady problem with the same boundary conditions.

\subsection{Notation}\label{sec:notation}
Recall $\O=\mathbb{T}^2\times \mathbb{R}^+$. We use the general norms:
\begin{align*}
    &  \Vert f\Vert_{L^2_\nu} := \Vert \nu^{1/2}f(v)\Vert_{L^2_v}=\Big(\int_{\R^3}\nu(v)|f(v)|^2\dd v\Big)^{1/2}, \\
         & \Vert f\Vert_{L^2_{t,x,v}} := \Big(\int_0^t \Vert f(s)\Vert_{L^2_{x,v}}^2 \dd s \Big)^{1/2},\\
          & \Vert f\Vert_{L^2_{t,x,\nu}} := \Big(\int_0^t \Vert \nu^{1/2}f(s)\Vert_{L^2_{x,v}}^2 \dd s \Big)^{1/2},\\
    & \Vert f\Vert_{L^\infty_{t,x,v}} := \sup_{0\leq s\leq t}\sup_{(x,v)\in \O\times \mathbb{R}^3}|f(s,x,v)|, \\
    & \Vert f\Vert_{L^\infty_t L^2_{x,v}} := \sup_{0\leq s\leq t} \Vert f(s)\Vert_{L^2_{x,v}}.
    \end{align*}
We also use the following notations for the boundary integral:
\begin{align*}
    & \p\O : = \mathbb{T}^2\times \{x_3=0\}, \\
    & \gamma_\pm := \{(x,v)\in \p\O\times \mathbb{R}^3:  \ v_3\lessgtr 0 \}, \\
    & \int_{\gamma} f \dd \gamma := \int_{\gamma_+} f \dd\gamma + \int_{\gamma_-} f\dd \gamma, \\
    & \int_{\gamma_\pm} f \dd \gamma:= \int_{\p\O} \int_{v_3\lessgtr 0} f v_3 \dd v \dd S_x, \\
    & | f|_{L^2_{\gamma_\pm}} := \Big(\int_{\p\O}\int_{v_3\lessgtr 0} |f(x,v)|^2 |v_3| \dd v \dd S_x\Big)^{1/2}, \\
    & |f|_{L^\infty_{x,v}}:= \sup_{(x,v)\in\p\O \times \mathbb{R}^3} |f(x,v)|.
\end{align*}
Here $\dd S_x$ represents the surface integral.
    
Moreover, $f \lesssim g$  means that there exists $C>1$ such that $f\leq C g$, and $f\leq o(1)g$ and $f \lesssim o(1)g$ both mean that there exists $0<c\ll 1$ such that $f\leq c g$.

\section{Preliminary}\label{sec:prelim}

\subsection{Estimate to wall Maxwellian and Boltzmann operator}

Recall \eqref{wall_maxwell}, \eqref{def.bv} and \eqref{def.mu}. We have

\begin{lemma}\label{lemma:taylor}
The following properties hold for the wall Maxwellian \eqref{wall_maxwell}:
\begin{align}
    &   \Big|\mu^{-1/4} \frac{M_w - c_\mu \mu}{\sqrt{\mu}}  \Big| \lesssim \e ,. \label{taylor_first}
\end{align}

\end{lemma}

\begin{proof}
By the definition of $M_w$ in \eqref{wall_maxwell}, we apply the mean value theorem to have
\begin{align*}
    &  |M_w - c_\mu \mu| = \frac{1}{2\pi}\Big|  \exp\Big( - \frac{|v-(\e u_b,0,0)|^2}{2} \Big) - e^{-\frac{|v|^2}{2}}\Big| \\
    & \leq \frac{\e u_b}{2\pi} e^{-\frac{|v_2|^2+|v_3|^2}{2}} \sup_{|c|\leq \e u_b} e^{-\frac{|v-c|^2}{2}} \lesssim \frac{1}{2\pi} e^{-\frac{|v_2|^2 + |v_3|^2}{2}} e^{-\frac{3|v_1|^2}{8}} \lesssim \mu^{3/4}.
\end{align*}
In the second last inequality, from $\e u_b \ll 1$, we have
\begin{align}
    &  \sup_{|c|\leq \e u_b} e^{-\frac{|v_1-c|^2}{2}} \lesssim e^{-\frac{|v_1|^2}{2}}e^{\frac{|v_1|^2}{8}} e^{10|c|^2} \lesssim e^{-\frac{3|v_1|^2}{8}}. \label{extra_term}
\end{align}
This concludes \eqref{taylor_first}.

\end{proof}

The following estimates of the nonlinear operator $\Gamma$ are standard; we refer readers to \cite{EGKM} for the proof.
\begin{lemma}\label{lemma:gamma}
We have the following properties of the non-linear Boltzmann operator $\Gamma$ defined in \eqref{operator}:
\begin{align}
    & \Vert \nu^{-1/2}\Gamma(f,(\mathbf{I}-\mathbf{P})g)\Vert_{L^2_{x,v}} \lesssim \Vert wf\Vert_{L^\infty_{x,v}} \Vert (\mathbf{I}-\P)g\Vert_{L^2_{x,\nu}}, \notag\\
    & \Vert \nu^{-1/2}\Gamma((\mathbf{I}-\mathbf{P})f,g)\Vert_{L^2_{x,v}}\lesssim \Vert wg\Vert_{L^\infty_{x,v}} \Vert (\mathbf{I}-\mathbf{P})f\Vert_{L^2_{x,\nu}},\notag\\
    & \Vert \nu^{-1/2}\Gamma(\mathbf{P}f,\mathbf{P}g)\Vert_{L^2_{x,v}} \lesssim \Vert  f\Vert_{L^2_{x,v}} \Vert w g\Vert_{L^\infty_{x,v}}, \label{gamma_f_g_2infty}\\
    & \Vert \langle v\rangle^{-1} w \Gamma(f,g)\Vert_{L^\infty_{x,v}} \lesssim \Vert wf\Vert_{L^\infty_{x,v}}\Vert wg\Vert_{L^\infty_{x,v}}, \label{gamma_infty}\\
    & \Vert \Gamma(\mu^{1/4} f, \mu^{1/4} g)\Vert_{L^2_{x,v}} \lesssim \Vert f\Vert_{L^2_{x,v}}\Vert g\Vert_{L^\infty_{x,v}}.\label{gamma_mu_f}
\end{align}
\end{lemma}

\subsection{Fluid estimate}
We summarize the fluid estimate in the following lemmas.

\begin{lemma}\label{lemma:fluid}
Recall the background fluid solution in \eqref{u_p}. The following estimates to $u$ holds:
\begin{align}
    & \Vert u(t) \Vert_{L^2_x} \lesssim u_b (t+\delta)^{1/4} , \label{add.u2x} 
\end{align}
for $1\leq j+k\leq 6$,
\begin{align}
    &\Vert \p_t^j \nabla_x^k u \Vert_{L^2_x}  \lesssim  u_b ,    \label{u_l2_est}
\end{align}
\begin{align}
    & \Vert u\Vert_{L^\infty_{x}}+  \Vert \p_t^j \nabla_x^k u\Vert_{L^\infty_x} \lesssim u_b. \label{u_linfty_est}
\end{align}
Thus, we have the following control for $f_1$ defined in \eqref{f1}:
\begin{align}
    &\sum_{j+k\leq 7}(\Vert w\p_t^j\nabla_x^kf_1\Vert_{L^\infty_{x,v}} + \Vert \p_t^j\nabla_x^k f_1\Vert_{L^2_{x,v}} )\lesssim  u_b. \label{f_12_linfty_bdd}
\end{align}

\end{lemma}

\begin{proof}
By the definition of $u$ in \eqref{u_p}, we directly have
\begin{align*}
    & \Vert u\Vert_{L^\infty_{x}} \lesssim u_b.
\end{align*}

To compute $\Vert u\Vert_{L^2_x}$, we integrate $u_p$ as
\begin{align*}
    &\Vert u_p\Vert_{L^2_x}^2  \lesssim |u_b|^2 \int_0^\infty   \Big| \int^\infty_{\frac{x_3}{\sqrt{4\kappa (t+\delta)}}} e^{-r+1} \dd r   \Big|^2 \dd x_3  \\
    &\lesssim \int_0^\infty     \exp\Big\{-\frac{2x_3}{\sqrt{4\kappa (t+\delta)}} \Big\}       \dd x_3 \lesssim |u_b|^2\sqrt{t+\delta}.
\end{align*}
This leads to the $L^2$ control of $u$:
\begin{align*}
    & \Vert u(t)\Vert_{L^2_x} \lesssim (t+\delta)^{1/4} u_b,
\end{align*}
which proves \eqref{add.u2x}.

Next, we estimate $\p_t u$. We compute that
\begin{align*}
    & \p_t u_p = \Big|\frac{-2u_b}{\sqrt{\pi}} \frac{x_3}{\sqrt{4\kappa}(t+\delta)^{3/2}} \exp\Big(-\frac{|x_3|^2}{4\kappa (t+\delta)}\Big)\Big|\lesssim \frac{u_b}{t+\delta} \lesssim u_b,
\end{align*}
\begin{align*}
    & \Vert \p_t u_p\Vert_{L^2_x}^2 \lesssim  \Big\Vert \frac{u_b x_3}{\sqrt{4\kappa}(t+\delta)^{3/2}} \exp\Big(-\frac{|x_3|^2}{4\kappa (t+\delta)}\Big)  \Big\Vert_{L^2_x}^2 \\
    &\lesssim \frac{|u_b|^2}{(t+\delta)^2}\int_0^\infty  \frac{|x_3|^2}{t+\delta} \exp\Big(-\frac{|x_3|^2}{2\kappa(t+\delta)} \Big) \dd x_3  \\
    &\lesssim \frac{|u_b|^2\sqrt{t+\delta}}{(t+\delta)^2}  \lesssim \frac{|u_b|^2}{(t+\delta)^{3/2}} \lesssim |u_b|^2.
\end{align*}
This leads to
\begin{align*}
    & \Vert \p_t u\Vert_{L^\infty_x}\lesssim \frac{u_b}{t+\delta} \lesssim u_b, \  \Vert \p_t u\Vert_{L^2_x} \lesssim \frac{u_b}{(t+\delta)^{3/4}} \lesssim u_b.
\end{align*}
Similarly, the high order time derivative can be computed as
\begin{align*}
    &  \Vert\p_t^j u_p\Vert_{L^\infty_x} + \Vert\p_t^j u_p\Vert_{L^2_x} \lesssim u_b.
\end{align*}

Next, we compute the spatial derivative of $u$. We have
\begin{align*}
    & |\p_{x_3} u_p| = \Big| \frac{2u_b}{\sqrt{\pi}} \frac{1}{\sqrt{4\kappa (t+\delta)}} \exp\Big( -\frac{|x_3|^2}{4\kappa (t+\delta)} \Big) \Big|  \lesssim \frac{u_b}{\sqrt{(t+\delta)}}\lesssim u_b ,
\end{align*}
\begin{align*}
    & \Vert \nabla_x u_p\Vert_{L^2_x}^2 \lesssim   \Big\Vert    \frac{u_b}{\sqrt{4\kappa (t+\delta)}} \exp\Big\{-\frac{|x_3|^2}{4\kappa(t+\delta)} \Big\}   \Big\Vert_{L^2_x}^2 \\
    &\lesssim \frac{|u_b|^2}{t+\delta} \int_0^\infty \exp\Big(-\frac{2|x_3|^2}{4\kappa (t+\delta)} \Big) \dd x_3 \\
    & \lesssim |u_b|^2\frac{\sqrt{t+\delta}}{t+\delta} = \frac{|u_b|^2}{\sqrt{t+\delta}} \lesssim |u_b|^2.
\end{align*}
This leads to
\begin{align*}
    &  \Vert \nabla_x u\Vert_{L^\infty_x} \lesssim \frac{u_b}{(t+\delta)^{1/2}} \lesssim u_b, \    \Vert \nabla_x u\Vert_{L^2_x} \lesssim \frac{u_b}{(t+\delta)^{1/4}} \lesssim u_b.
\end{align*}

The estimate to the high order spatial derivative $\nabla^k_x u$ and the mixed derivative $\p_t^j \nabla_x^k u$ are similar, as we have
\begin{align*}
    &  |\p_{x_3}^2 u_p| \lesssim \frac{u_b}{(t+\delta)} \frac{x_3}{\sqrt{4\kappa (t+\delta)}} \exp \Big(-\frac{|x_3|^2}{4\kappa (t+\delta)} \Big) \lesssim \frac{u_b}{t+\delta} \lesssim u_b, \\
    & |\p_t \p_{x_3} u_p|  \lesssim u_b\Big[\frac{1}{(t+\delta)^{3/2}}  + \frac{1}{(t+\delta)^{3/2}} \frac{|x_3|^2}{t+\delta} \Big] \exp \Big(-\frac{|x_3|^2}{4\kappa (t+\delta)} \Big) \\
    &\lesssim \frac{u_b}{(t+\delta)^{3/2}} \lesssim u_b,
\end{align*}
and inductively, we conclude \eqref{u_l2_est} and \eqref{u_linfty_est}.

We conclude the lemma.
\end{proof}

\begin{lemma}\label{lemma:inter_solution}
Assume the initial condition satisfies \eqref{f_i_intial} and the compatibility condition. Then it holds
\begin{align}
    &    \sum_{i=2}^5 \sum_{j+k\leq 8-i} \Vert \mu^{-1/4}\p_t^j \nabla_{x}^kf_i\Vert_{L^2_{x,v}} +  \sum_{i=2}^5 \sum_{j+k\leq 6-i}\Vert \mu^{-1/4} \p_t^j\nabla_{x}^k f_i\Vert_{L^\infty_{x,v}}  \notag\\
    &+ \sum_{i=2}^4 \sum_{j+k\leq 1} \Vert \mu^{-1/4}we^{c\xi} \p_t^j \nabla^{k}_{x_\parallel}\mathfrak{f}_i\Vert_{L^\infty_{\xi,v}} < C_3(T,\delta)+u_b. \notag
\end{align}

\end{lemma}

\begin{remark}\label{rmk:fluid_estimate}
We refer to the proof in \cite{guo2021hilbert} for the incompressible Euler equation. Here we do not have the viscous boundary layer, and the equation is given by the linear Navier-Stokes-Fourier equation instead. Therefore, we expect to have the  high regularity control for the interior solution and Knudsen boundary layer solution, for any given time $T$(though upper bound depends on $T$). Also see \cite{wu2022hydrodynamic}.

There exhibits an extra gain in the velocity weight $\mu^{-1/4}$ since $(\mathbf{I}-\mathbf{P})f_i$ itself contains a Maxwellian factor $\mu^{1/2}$. This can be inductively justified using \eqref{f_i}.

Here we note that the $L^2_{x,v}$ estimate needs higher regularity with order $8-i$, then by the Sobolev embedding, the $L^\infty_{x,v}$ estimate enjoys regularity up to order $6-i$. Moreover, the requirement of the derivative order differs for $f_i$. For instance, $f_5$ requires one order of derivative in \eqref{h_def}. While solving \eqref{u_theta_i_eqn}, the source term \eqref{S_theta}, \eqref{S_u} requires the regularity for $\nabla_x\p_t f_3$ and $\nabla_x^2 f_4$. Therefore, two order of derivative to $f_4$ is needed. Inductively, we need $6-i$ order derivative of $f_i$ in the $L^\infty_{x,v}$ control. 

We also point out that the $u_b$ occurs from the contribution of $f_1$ in \eqref{f_12_linfty_bdd}, as $(\mathbf{I}-\mathbf{P})f_2$ is determined by $f_1,\nabla_x f_1$ from \eqref{f2_derivation}. Inductively, from \eqref{f_i} and \eqref{f_i_bl}, $f_i,\mathfrak{f}_i$ are dependent of $\p_t^j \nabla_x^k f_1$ with $j+k \leq i-1$. In consequence, the constant $C_3(T,\delta)$ depends on time $T$ and $\delta$ from $f_1$ in \eqref{f1}.
\end{remark}

Then the inhomogeneous source \eqref{h_def} can be controlled by the following lemma:
\begin{lemma}\label{lemma:h_control}
It holds
\begin{align*}
& \Vert h(t)\Vert_{L^2_{x,v}} + \Vert wh(t)\Vert_{L^\infty_{x,v}} \lesssim C_3(T,\delta) \e^{1/2-c_0}.    
\end{align*}

\end{lemma}

\begin{proof}
From Lemma \ref{lemma:inter_solution}, with the extra weight $\mu^{-1/4}$, the $v$ growth in \eqref{h_def} can be absorbed. It suffices to estimate the nonlinear terms. Again, with the extra weight $\mu^{-1/4}$, all nonlinear terms can be controlled using \eqref{gamma_mu_f} and \eqref{gamma_infty} in Lemma \ref{lemma:gamma}.
\end{proof}

\section{Linear estimate}\label{sec:linear}

In this section, we investigate the linear equation:
\begin{align}
\begin{cases}
     & \e \p_t f + v\cdot \nabla_x f + \frac{1}{\e} \mathcal{L}f = g, \\
    & f|_{\gamma_-} = P_\gamma f + q . 
\end{cases} \label{linear_f}   
\end{align}
The estimate in this section will be applied to the non-linear equation studied in Section \ref{sec:nonlinear}.

In this section, we mainly establish an $L^\infty$ estimate to $f$ in Lemma \ref{lemma:linfty}. We first recall the velocity weight defined in \eqref{w_weight}. In the following lemma, we construct the $L^\infty$ estimate to \eqref{linear_f}. Since the proof is standard from an $L^2-L^\infty$ bootstrap argument, we refer readers to \cite{EGKM2}.

\begin{lemma}\label{lemma:linfty}
Let $f$ be the solution to \eqref{linear_f}. Assume that
\begin{align*}
    & \Vert \e^{3/2}w f(t)\Vert_{L^\infty_{x,v}}  < \infty.
\end{align*}
Then it holds that
\begin{align}
  \Vert \e^{3/2} w f(t)\Vert_{L^\infty_{x,v}}  &  \lesssim \Vert \e^{3/2} w f_0\Vert_{L^\infty_{x,v}} +  \e^{3/2} |wq|_{L^\infty_{t,x,v}}  \notag\\
    &+ \e^{5/2} \Vert \langle v\rangle^{-1} w g\Vert_{L^\infty_{t,x,v}} + \Vert f\Vert_{L^\infty_t L^2_{x,v}}.  \notag
\end{align}

\end{lemma}

\section{Nonlinear estimate}\label{sec:nonlinear}
In this section, we focus on the nonlinear remainder equation \eqref{remainder_eqn}.

We denote the energy and energy dissipation norm as
\begin{align}
    &  \Vert R\Vert_{E,t} : = \e^{-1}\Vert (\mathbf{I}-\P)R\Vert_{L^2_{t,x,\nu}} + \e^{-1/2} |(I-P_\gamma)R|_{L^2_{t,\gamma_+}} +  \Vert R\Vert_{L^\infty_t L^2_{x,v}} , \label{energy_norm}
\end{align}
and the velocity weighted sup-norm as
\begin{align}
    &  \Vert R\Vert_{\infty,t} : =  \Vert \e^{3/2}wR\Vert_{L^\infty_{t,x,v}} . \notag
\end{align}

In this section, we will mainly prove the following a priori estimate.

\begin{proposition}\label{prop:norm_bdd}
Let $R$ be the solution to \eqref{remainder_eqn} with boundary condition \eqref{R_bc} on $[0,T]$ for $T>0$. Let $c_0=\frac{1}{4}.$ Suppose $t\leq T$, $|u_b|\sqrt{T}\ll 1$, and
\begin{align}
\Vert R\Vert_{E,t} + \Vert R\Vert_{\infty,t} < \infty.     \label{apriori_assumption}
\end{align}
Then the following estimate holds:
\begin{align*}
    & \Vert R\Vert_{E,t} + \Vert R\Vert_{\infty,t}  \\
    &\lesssim CT\e^{1/2}+ \Vert \e^{3/2}wR_0\Vert_{L^\infty_{x,v}} + \Vert R_0\Vert_{L^2_{x,v}}  +  T\e^{1/2}\Vert R\Vert_{E,t}^2 + T\e^{1/2}\Vert R\Vert_{\infty,t}^2.
\end{align*}
\end{proposition}

Now, to prove Proposition \ref{prop:norm_bdd} we investigate the nonlinear equation \eqref{remainder_eqn} by replacing $g$ and $q$ in \eqref{linear_f} with
\begin{align}
    & g = \e h + \tilde{\mathcal{L}}R + \e^{3/2+c_0}\Gamma(R,R), \label{g_nonlinear}\\
    & q = \frac{M_w-c_\mu \mu}{\sqrt{\mu}}\int_{v_3'<0} R \sqrt{\mu(v')}|v_3'| \dd v' + r \ \mbox{for } x_3=0, v_3>0,
    \label{q_nonlinear} 
\end{align}
where $r$ is defined in \eqref{r_bc}.

\subsection{Preparation: estimates of the boundary term and source term}

In this section, we summarize the necessary estimates for both the source term $g$ \eqref{g_nonlinear} and boundary term $q$ \eqref{q_nonlinear}.

The first term \eqref{q_nonlinear} suggests a trace control in the following lemma.

\begin{lemma}[Ukai trace Theorem, Lemma 3.2 of \cite{EGKM2}]\label{lemma:ukai}
Define the non-grazing set as $\gamma_+^{\delta_1}:= \{(x,v)\in \gamma_+: |v_3|>\delta_1 \text{ and }|v|<\frac{1}{\delta_1}\}$ for $\delta_1>0$. Then for $0<\delta_1\ll 1$, it holds that
\begin{align*}
    & \frac{1}{\e} \int_0^t \int_{\gamma_+^{\delta_1}} |f|\dd \gamma \dd s \lesssim_{\delta_1} \int_{\mathbb{R}^3} \int_{\O} |f(0)| \dd x \dd v + \int_0^t \int_{\mathbb{R}^3} \int_{\O} |f| \dd x \dd v \dd s \\
    &+ \int_0^t \int_{\mathbb{R}^3} \int_{\O} |\p_t f + \frac{1}{\e}v\cdot \nabla_x f| \dd x \dd v \dd s.
\end{align*}

\end{lemma}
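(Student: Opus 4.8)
The plan is to prove the estimate by integrating the streaming equation along characteristics of the operator $\p_t+\frac1\e v\cdot\nabla_x$ and using that, on the non‑grazing set, these characteristics cross $\{x_3=0\}$ transversally. Fix $(x,v)\in\gamma_+^{\delta_1}$, so that $x=(x_1,x_2,0)\in\p\O$ and $v_3<-\delta_1<0$, and fix $\tau\in[0,t]$. Consider the backward characteristic $X(s):=x+\tfrac{s-\tau}{\e}\,v$ for $s\in[0,\tau]$. Since $v_3<0$, its normal component $X_3(s)=\tfrac{(\tau-s)|v_3|}{\e}$ is strictly positive for $s<\tau$, so the characteristic enters $\O$ immediately and — this is where the half‑space geometry genuinely simplifies matters compared with a bounded domain — it can never return to $\{x_3=0\}$ before time $0$. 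Writing $g:=\p_tf+\tfrac1\e v\cdot\nabla_xf$ and applying the fundamental theorem of calculus to $s\mapsto f(s,X(s),v)$ gives the pointwise bound
\begin{align*}
|f(\tau,x,v)|\ \le\ |f(0,X(0),v)|+\int_0^\tau |g(s,X(s),v)|\,\dd s .
\end{align*}

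The second step is the change of variables that absorbs the $\tfrac1\e$. We integrate the displayed inequality against the measure $|v_3|\,\dd v\,\dd S_x$ on $\gamma_+^{\delta_1}$, against $\dd\tau$ on $[0,t]$, and multiply by $\tfrac1\e$. For the initial‑data term, we fix $v$ and change the time variable $\tau\mapsto x_3:=\tfrac{\tau|v_3|}{\e}$, whose Jacobian $\tfrac{|v_3|}{\e}$ is exactly what is supplied by the $\tfrac1\e$ prefactor together with the weight $|v_3|$; the tangential shift $x\mapsto X(0)$ is a translation of the torus $\mathbb{T}^2$ of unit Jacobian, hence harmless after integrating $\dd S_x$. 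This converts $\tfrac1\e\int_0^t\int_{\gamma_+^{\delta_1}}|f(0,X(0),v)|\,|v_3|\,\dd v\,\dd S_x\,\dd\tau$ into $\int_{|v_3|>\delta_1,\,|v|<1/\delta_1}\big(\int_\O|f(0,y,v)|\,\dd y\big)\dd v\le\int_{\R^3}\int_\O|f(0)|\,\dd x\,\dd v$. For the source term one first applies Fubini, $\int_0^t\int_0^\tau\dd s\,\dd\tau=\int_0^t\int_s^t\dd\tau\,\dd s$, and then, for fixed $(v,s)$, the same substitution $\tau\mapsto\tfrac{(\tau-s)|v_3|}{\e}$; this produces $\int_0^t\int_{\R^3}\int_\O|g(s,y,v)|\,\dd y\,\dd v\,\dd s$. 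Recalling $g=\p_tf+\tfrac1\e v\cdot\nabla_xf$ already yields the claimed bound (the middle term $\int_0^t\int_{\R^3}\int_\O|f|$ on the right is not even needed in the half‑space case).

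The subtle point — and the place where the non‑grazing restriction is essential — is the transversality: the change of variables $\tau\leftrightarrow x_3$ degenerates as $v_3\to0$, its inverse Jacobian being $\e/|v_3|$, which is bounded (by $\e/\delta_1$) only on $\{|v_3|>\delta_1\}$; for grazing velocities the trace can genuinely fail to be controlled, which is why the constant is $\lesssim_{\delta_1}$. The cutoff $|v|<1/\delta_1$ plays only a cosmetic role in the above argument (keeping the velocity region bounded) and could be dropped here. Finally, since $f$ is a priori only a weak solution of the transport equation, the characteristic identity should be justified by first running the argument on a mollification of $f$ (or on $\sqrt{f^2+\eta^2}-\eta$) and passing to the limit, the transport identity being stable under this regularization; this step is standard and we would only sketch it, referring to \cite{EGKM2}.
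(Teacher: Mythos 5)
Your argument is correct, and it is in fact a self-contained proof where the paper simply cites \cite{EGKM2}. Since the cited lemma is stated and proved there for a general $C^1$ bounded domain (where backward characteristics can re-enter the boundary, which requires extra care), while here the domain is the half-space $\mathbb{T}^2\times\R^+$, your observation that the backward characteristic from $\gamma_+$ exits $\{x_3=0\}$ once and for all is exactly the simplification that makes the short characteristics-plus-change-of-variables proof work. The bookkeeping is right: $X(s)=x+\frac{s-\tau}{\e}v$ with $x_3=0$, $v_3<0$ gives $X_3(s)=\frac{(\tau-s)|v_3|}{\e}>0$; the Jacobian of $(x_1,x_2,\tau)\mapsto X(0)$ (resp.\ $(x_1,x_2,\tau)\mapsto X(s)$ for fixed $s$ after Fubini) is $|v_3|/\e$, which exactly cancels the prefactor $\e^{-1}|v_3|$; the tangential shift is a measure-preserving translation of $\mathbb{T}^2$; and the image lies in $\O$, so enlarging to all of $\O$ gives the bound. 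You also correctly flag that one first argues for smooth (or mollified) $f$ and passes to the limit, which is standard.

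Two accurate observations you make are worth highlighting: in the half-space your argument produces a constant independent of $\delta_1$ and does not need the middle term $\int_0^t\!\int|f|$; both appear in the cited statement only because that statement is calibrated for bounded domains where grazing rays and multiple bounces genuinely intervene. Stating the lemma as in \cite{EGKM2} is of course still valid (it is a weaker claim), and the paper uses it as such, but your version is sharper for this geometry.
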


We estimate the contribution of the first term in \eqref{q_nonlinear} in the following lemma.

\begin{lemma}\label{lemma:trace_R}
Let $R$ be the solution to \eqref{remainder_eqn}, then the following trace control holds:
\begin{align}
    & \e^{-1} \int_0^t \int_{\gamma_+} \Big|\frac{M_w - c_\mu \mu}{\sqrt{\mu}} \int_{v_3'<0} R \sqrt{\mu(v')} |v_3'| \dd v'  \Big|^2 \dd \gamma \dd s \notag\\
    &\lesssim \e \int_0^t \int_{\p\O} \int_{v_3' < 0 } |R|^2 |v_3'| \dd v'  \dd S_x \dd s \notag\\
    &\lesssim \e \int_0^t\int_{\gamma_+} |(I-P_\gamma)R|^2 \dd \gamma \dd s + \e^2\int_{\mathbb{R}^3} \int_{\O} |R_0|^2 \dd x \dd v \notag\\
    &+ \e^2 \int_0^t \int_{\mathbb{R}^3} \int_{\O} |R|^2 \dd x \dd v\dd s \notag\\
    & + \Big|\int_0^t \int_{\mathbb{R}^3} \int_{\O}-\mathcal{L}(R)R + \e^2 hR + \e \tilde{\mathcal{L}}(R)R + \e^{5/2+c_0}\Gamma(R,R)R   \dd x \dd v \dd s  \Big| 
  .\label{trace_R}
\end{align}

\end{lemma}

\begin{proof}

We apply \eqref{taylor_first} to compute that
\begin{align*}
    & \e^{-1} \int_{\gamma_+} \Big|\frac{M_w - c_\mu \mu}{\sqrt{\mu}} \int_{v_3'<0}R\sqrt{\mu(v')} |v_3'| \dd v' \Big|^2 \dd \gamma \\
    &\lesssim \e \int_{\gamma_+} \mu^{1/4}(v) \Big| \int_{v_3' < 0 } R \sqrt{\mu(v')} |v_3'| \dd v' \Big|^2 \dd\gamma \\
    & \lesssim \e \int_{\p\O} \Big| \int_{v_3' < 0 } R \sqrt{\mu(v')} |v_3'| \dd v' \Big|^2 \dd S_x \lesssim \e \int_{\p\O} \int_{v_3'<0} |R|^2 |v_3'|\dd v' \dd S_x \\ 
    &\leq \e \int_{\p\O}  \Big[ \int_{|v_3'|<\delta_1 \text{ or }|v'|>\delta_1^{-1}} + \int_{|v_3'|>\delta_1 , |v'|<\delta^{-1}}\Big] |R|^2  |v_3'| \dd v'  \dd S_x \\
    & \lesssim  \e \int_{\p\O} \int_{|v_3'|<\delta_1 \text{ or }|v'|>\delta_1^{-1}} [|(I-P_\gamma) R|^2 + |P_\gamma R|^2 ]  |v_3'| \dd v' \dd S_x + \e \int_{\gamma_+^{\delta_1}} |R|^2 \dd \gamma \\
    & \lesssim \e \int_{\gamma_+} |(I-P_\gamma)R|^2 \dd \gamma + \e \int_{\gamma_+^{\delta_1}} |R|^2 \dd \gamma \\
    &+ \e \int_{\p\O} \int_{|v_3|<\delta_1 \text{ or }|v|>\delta_1^{-1}}  \mu(v) |v_3| \Big|\int_{v_3'<0} R \sqrt{\mu(v')}|v_3'| \dd v'\Big|^2 \dd v \dd S_x \\
    &\lesssim \e \int_{\gamma_+} |(I-P_\gamma)R|^2 \dd \gamma + \e \int_{\gamma_+^{\delta_1}} |R|^2 \dd \gamma + o(1)\e \int_{\p\O} \int_{v_3'<0} |R|^2 |v_3'| \dd v'  \dd S_x.
\end{align*}
Absorbing the last term by the second line, we further derive that
\begin{align*}
    & \e^{-1} \int_{\gamma_+} \Big|\frac{M_w - c_\mu \mu}{\sqrt{\mu}} \int_{v_3'<0}R\sqrt{\mu(v')} |v_3'| \dd v' \Big|^2 \dd \gamma \\
    & \lesssim \e \int_{\p\O} \int_{v_3' < 0 } |R|^2 |v_3'| \dd v'  \dd S_x  \lesssim \e \int_{\gamma_+} |(I-P_\gamma)R|^2 \dd \gamma + \e^2 \frac{1}{\e} \int_{\gamma_+^{\delta_1}} |R|^2 \dd \gamma .
\end{align*}
From \eqref{remainder_eqn}, the equation of $R^2$ satisfies
\begin{align*}
    \p_t R^2 + \frac{1}{\e} v\cdot \nabla_x R^2 + \frac{2}{\e^2} \mathcal{L}(R) R = 2h R + \frac{2\tilde{\mathcal{L}}(R)R}{\e} + 2\e^{1/2+c_0}\Gamma(R,R)R.
\end{align*}
Applying Lemma \ref{lemma:ukai} to $\frac{1}{\e}\int_{\gamma_+^{\delta_1}} |R|^2 \dd \gamma$, we conclude \eqref{trace_R}.

\end{proof}

In view of Lemma \ref{lemma:linfty} and the $L^2$ energy estimate, we need the following two estimates of $q$ defined in \eqref{q_nonlinear}:
\begin{align*}
    &   \ |\e^{3/2}wq|_{L^\infty_{t,x,v}} , \ \ \ |q|_{L^2_{\gamma_-}}.
\end{align*}

Then we summarize all estimates for $q$ \eqref{q_nonlinear} in the following lemma.

\begin{lemma}\label{lemma:q_estimate}
The contribution of $q$ in Lemma \ref{lemma:linfty} is bounded as
\begin{align}
    & |\e^{3/2}wq|_{L^\infty_{t,x,v}} \lesssim \e \Vert \e^{3/2}wR\Vert_{L^\infty_{t,x,v}} + C_3\e^{4-c_0}.\label{q_infty}
\end{align}

While the first term of \eqref{q_nonlinear} is estimated in Lemma \ref{lemma:trace_R}, the second term of \eqref{q_nonlinear} is bounded as
\begin{align}
    & \int_{\gamma_-} r^2 \dd \gamma\lesssim C_3\e^{5-2c_0} .  \label{r_l2} 
\end{align}

\end{lemma}

\begin{proof}
First, we derive an upper bound for $r$ defined in \eqref{R_bc}. By Lemma \ref{lemma:inter_solution}, the first two terms in \eqref{R_bc} are bounded as
\begin{align*}
    & \e^{5/2-c_0}\Big|\mu^{-1/4} \Big[ \frac{M_w}{\sqrt{\mu}} \int_{v_3'<0} f_5 \sqrt{\mu(v')}|v_3'| \dd v'  -f_5\Big] \Big|_{L^\infty_{t,x,v}} \lesssim C_3\e^{5/2-c_0} .
\end{align*}

The last term in \eqref{r_bc} is also controlled as $C_3 \e^{5/2}$. Thus we obtain
\begin{align}
    & |w r|_{L^\infty_{t,x,v}} \lesssim  C_3\e^{5/2-c_0} .  \label{wr_infty}
\end{align}

Next, we prove \eqref{q_infty}. We apply \eqref{taylor_first} to compute that for $w\lesssim \mu^{-1/4}$,
\begin{align*}
    & \Big|\e^{3/2} e^{\beta |v|^2} \frac{M_w-c_\mu \mu}{\sqrt{\mu}} \int_{v_3'<0} R\sqrt{\mu(v')}|v_3'| \dd v'  \Big|_{L^\infty_{t,x,v}}  \lesssim \e \Vert \e^{3/2} wR\Vert_{L^\infty_{t,x,v}}.
\end{align*}
For $\e^{3/2}r$ we apply \eqref{wr_infty} to have
\begin{align*}
    & |\e^{3/2} w  r|_{L^\infty_{t,x,v}} \lesssim \e^{3/2} |w r|_{L^\infty_{t,x,v}}\lesssim C\e^{4-c_0} .
\end{align*}
We conclude \eqref{q_infty}.

Last, we prove \eqref{r_l2}. We apply \eqref{wr_infty} to compute that
\begin{align*}
    &  \int_{\gamma_-} r^2 \dd \gamma \lesssim |w r|_{L^\infty_{x,v}}^2 \int_{\gamma_-} w^{-2} \dd \gamma \lesssim C \e^{5-2c_0}  .
\end{align*}

We have completed the proof.
\end{proof}

In the following lemma, we summarize the estimate of the source term in \eqref{remainder_eqn}.

\begin{lemma}\label{lemma:source_estimate}
We control the source term in \eqref{remainder_eqn} under $L^2$ energy estimate:
\begin{align}
    & \Vert h \Vert_{L^2_{x,v}} \lesssim C_3\e^{1/2-c_0} ,   \label{h_l2_bdd_2}  
\end{align}

\begin{equation}\label{l_tilde_2}
\Vert \nu^{-1/2} \tilde{\mathcal{L}}(R) \Vert_{L^2_{x,v}} \lesssim (u_b+C_3\e) \Vert R\Vert_{L^2_{x,v}} + \e (u_b+C_3\e) \Vert \e^{-1}(\mathbf{I}-\mathbf{P})R\Vert_{L^2_{x,\nu}} .
\end{equation}
\begin{align}
 \Vert \nu^{-1/2}\Gamma(R,R)\Vert_{L^2_{x,v}}   &  \lesssim \e^{-1/2} \Vert \e^{-1}(\mathbf{I}-\P)R\Vert_{L^2_{x,\nu}} \Vert \e^{3/2}wR\Vert_{L^\infty_{x,v}} \notag \\
    & \ \ \ \ \ + \e^{-3/2}\Vert R\Vert_{L^2_{x,v}}\Vert \e^{3/2}wR\Vert_{L^\infty_{x,v}}, \label{gamma_l2_bdd}
\end{align}

We also have the $L^\infty_{x,v}$ control to the source term:
\begin{align}
    & \Vert \langle v\rangle^{-1} w h\Vert_{L^\infty_{x,v}} \lesssim \e^{1/2-c_0} C_3,  \label{h_infty_bdd_2}
\end{align}
\begin{equation}\label{l_tilde_infty_bdd}
 \Vert \langle v\rangle^{-1} w \tilde{\mathcal{L}}(R)\Vert_{L^\infty_{x,v}} \lesssim (u_b+C_3\e) \e^{-3/2} \Vert \e^{3/2}wR\Vert_{L^\infty_{x,v}} .
\end{equation}
\begin{equation}\label{gamma_infty_bdd}
     \Vert \langle v\rangle^{-1}w\Gamma(R,R)\Vert_{L^\infty_{x,v}} \lesssim \e^{-3}\Vert \e^{3/2}wR\Vert_{L^\infty_{x,v}}^2 .
\end{equation}

\end{lemma}

\begin{proof}
First of all, \eqref{h_l2_bdd_2} is proved in Lemma \ref{lemma:h_control}.

For \eqref{l_tilde_2}, we apply Lemma \ref{lemma:gamma} to have
\begin{align*}
    & \Vert \nu^{-1/2} \tilde{\mathcal{L}}(R) \Vert_{L^2_{x,v}} = 2\Vert \nu^{-1/2}\Gamma(f_1+\sum_{i=2}^4 \e^{i-1} (f_i+\mathfrak{f}_i)+\e^4 f_5,R) \Vert_{L^2_{x,v}} \\
    & \lesssim \Vert \nu^{-1/2}[\Gamma(f_1+\sum_{i=2}^4 \e^{i-1} (f_i+\mathfrak{f}_i)+\e^4 f_5, (\mathbf{I}-\P)R) \Vert_{L^2_{x,v}}  \\
    &+ \Vert \nu^{-1/2}\Gamma(f_1+\sum_{i=2}^4 \e^{i-1} (f_i+\mathfrak{f}_i)+\e^4 f_5, \P R) \Vert_{L^2_{x,v}} \\
    &\lesssim \Vert f_1+\sum_{i=2}^4 \e^{i-1} (f_i+\mathfrak{f}_i)+\e^4 f_5 \Vert_{L^\infty_{x,v}} \Vert \P R\Vert_{L^2_{x,v}}  \\
    &+ \Vert f_1+\sum_{i=2}^4 \e^{i-1} (f_i+\mathfrak{f}_i)+\e^4 f_5\Vert_{L^\infty_{x,v}} \Vert (\mathbf{I}-\P)R\Vert_{L^2_{x,\nu}}  \\
& \lesssim \Vert f_1+\sum_{i=2}^4 \e^{i-1} (f_i+\mathfrak{f}_i)+\e^4 f_5 \Vert_{L^\infty_{x,v}} \Vert  R\Vert_{L^2_{x,v}}   \\
&+ \e\Vert f_1+\sum_{i=2}^4 \e^{i-1} (f_i+\mathfrak{f}_i)+\e^4 f_5\Vert_{L^\infty_{x,v}} \e^{-1}\Vert (\mathbf{I}-\P)R\Vert_{L^2_{x,\nu}} \\
& \lesssim (u_b + C\e) \Vert R\Vert_{L^2_{x,v}} + \e u_b \Vert \e^{-1}(\mathbf{I}-\mathbf{P})R\Vert_{L^2_{x,\nu}}.
\end{align*}
In the last line, we applied Lemma \ref{lemma:fluid} and Lemma \ref{lemma:inter_solution}. We conclude \eqref{l_tilde_2}.

For \eqref{gamma_l2_bdd}, we apply Lemma \ref{lemma:gamma} to have,
\begin{align*}
    & \Vert \nu^{-1/2}\Gamma(R,R)\Vert_{L^2_{x,v}} \lesssim  \Vert \nu^{-1/2}\Gamma(R, (\mathbf{I}-\P)R) \Vert_{L^2_{x,v}}  \\
    &  + \Vert \nu^{-1/2}\Gamma((\mathbf{I}-\P)R, R) \Vert_{L^2_{x,v}} + \Vert \nu^{-1/2}\Gamma(\P R, \P R) \Vert_{L^2_{x,v}} \\
&   \lesssim  \e^{1/2}\Vert \e^{-1}(\mathbf{I}-\mathbf{P})R\Vert_{L^2_{x,\nu}} \Vert \e^{1/2}wR\Vert_{L^\infty_{x,v}}+ \Vert \nu^{-1/2} \Gamma(\P R , \P R)\Vert_{L^2_{x,v}} .
\end{align*}

For the last term, we apply \eqref{gamma_f_g_2infty} to have
\begin{align*}
    & \Vert \nu^{-1/2}\Gamma(\P R, \P R)\Vert_{L^2_{x,v}} \lesssim \e^{-3/2}\Vert R\Vert_{L^2_{x,v}} \Vert \e^{3/2} wR\Vert_{L^\infty_{x,v}} .
\end{align*}
This concludes \eqref{gamma_l2_bdd}.

Last, we move onto the $L^\infty$ estimate. \eqref{h_infty_bdd_2} has been proved in Lemma \ref{lemma:h_control}. \eqref{l_tilde_infty_bdd} and \eqref{gamma_infty_bdd} directly follow by applying \eqref{gamma_infty} and the estimates for $f_i,\mathfrak{f}_i$ in Lemma \ref{lemma:fluid} and Lemma \ref{lemma:inter_solution}.
\end{proof}

\subsection{Nonlinear energy estimate}

To prove Proposition \ref{prop:norm_bdd}, we construct the $L^2$ energy estimate and the $L^\infty$ estimate, in the following Lemma \ref{lemma:l2} and Lemma \ref{lemma:linfty_R} respectively.

\begin{lemma}\label{lemma:l2}
Under the assumptions Proposition \ref{prop:norm_bdd}, with $c_0=\frac{1}{4}$, we have the following control for the energy and dissipation \eqref{energy_norm}:
    \begin{align}
& \Vert R\Vert_{E,t}\lesssim C_3T\e^{1/2} + \Vert R_0\Vert_{L^2_{x,v}}^2 + T\e^{1/2}\Vert R\Vert_{E,t}\Vert R\Vert_{\infty,t}. \label{l2_all_bdd}
    \end{align}
\end{lemma}

\begin{proof}

We multiply \eqref{remainder_eqn} by $R$ and take integration in $[0,t]\times \O\times \mathbb{R}^3$ to obtain
\begin{align}
    & \Vert R(t)\Vert^2_{L^2_{x,v}} + \e^{-1} \int_0^t\int_{\gamma_+} |(I-P_\gamma)R|^2 \dd \gamma \dd s + \e^{-2} \Vert (\mathbf{I}-\P)R \Vert^2_{L^2_{t,x,\nu}}  \notag\\
    & \lesssim \Vert R_0\Vert_{L^2_{x,v}}^2+ \underbrace{\e^{-1} \int_0^t\int_{\gamma_-} r^2 \dd \gamma \dd s}_{\eqref{L2_energy}_1} \notag \\
    &+ \underbrace{\e^{-1} \int_0^t \int_{\gamma_+} \Big|\frac{M_w - c_\mu \mu}{\sqrt{\mu}} \int_{v_3'<0}R\sqrt{\mu(v')} |v_3'| \dd v' \Big|^2 \dd \gamma \dd s}_{\eqref{L2_energy}_2}  \notag\\
    & + \underbrace{\Big|\int_0^t\int_{\mathbb{R}^3} \int_{\O} \e^{-1}\tilde{\mathcal{L}}(R) R \dd x \dd v \dd s\Big|}_{\eqref{L2_energy}_3} + \underbrace{  \Big|\int_0^t\int_{\mathbb{R}^3} \int_{\O}  h R \dd x \dd v \dd s\Big|}_{\eqref{L2_energy}_4}  \notag\\
    &+ \underbrace{\Big|\int_0^t\int_{\mathbb{R}^3} \int_{\O}  \e^{3/4} \Gamma(R,R) R \dd x \dd v \dd s \Big|}_{\eqref{L2_energy}_5} . \label{L2_energy}
\end{align}

By \eqref{r_l2} in Lemma \ref{lemma:q_estimate}, we have
\begin{align}
    &    \eqref{L2_energy}_1 \lesssim C_3T\e^{3}. \notag
\end{align}

For the contribution of the boundary term, by Lemma \ref{lemma:trace_R}, we have
\begin{align}
    & \eqref{L2_energy}_2\lesssim \e \int_0^t \int_{\gamma_+} |(I-P_\gamma)R|^2 \dd \gamma \dd s  \notag\\
    &+ \e^2 \int_{\mathbb{R}^3} \int_{\O} |R_0|^2 \dd x \dd v + \e^2 \int_0^t \int_{\mathbb{R}^3} \int_{\O} |R|^2 \dd x \dd v \dd s \notag\\
    & + \int_0^t \int_{\mathbb{R}^3} \int_{\O} \big|-\mathcal{L}(R)R + \e^2 hR + \e \tilde{\mathcal{L}}(R)R + \e^{11/4}\Gamma(R,R)R \big| \dd x \dd v \dd s \notag\\
    & \lesssim \e |(I-P_\gamma)R|^2_{L^2_{t,\gamma_+}}  +  \e^2 \Vert R_0\Vert_{L^2_{x,v}}^2 + \e^2 T  \Vert R\Vert_{L^\infty_t L^2_{x,v}}^2 \notag\\
    & +  \Vert  (\mathbf{I}-\P)R\Vert_{L^2_{t,x,\nu}}^2  + \e^{2}[\eqref{L2_energy}_3 + \eqref{L2_energy}_4 + \eqref{L2_energy}_5]. \notag
\end{align}

It remains to control $\eqref{L2_energy}_3 , \eqref{L2_energy}_4, \eqref{L2_energy}_5$. By definition of $\tilde{\mathcal{L}}$ in \eqref{L_tilde} and applying \eqref{l_tilde_2} in Lemma \ref{lemma:source_estimate}, we have
\begin{align}
    & \eqref{L2_energy}_3 = \Big| \int_0^t\int_{\mathbb{R}^3} \int_{\O} \e^{-1}\tilde{\mathcal{L}}(R) (\mathbf{I}-\mathbf{P})R \dd x \dd v \dd s  \Big| \notag\\
    &\lesssim \int_0^t \big[\Vert \nu^{-1/2}\tilde{\mathcal{L}}(R)\Vert_{L^2_{x,v}}^2 + o(1)\Vert \e^{-1}(\mathbf{I}-\mathbf{P})R\Vert_{L^2_{x,\nu}}^2 \big] \dd s \notag\\
    & \lesssim  T|u_b+C_3\e|^2 \Vert R\Vert_{L^\infty_t L^2_{x,v}}^2 + \e^2 |u_b+C_3\e|^2 \Vert \e^{-1}(\mathbf{I}-\P)R\Vert_{L^2_{t,x,\nu}}^2  \notag \\
    &+  o(1)\Vert \e^{-1}(\mathbf{I}-\P)R\Vert_{L^2_{t,x,\nu}}^2. \label{source_tilde}
\end{align}

By definition of $h$ in \eqref{h_def}, we apply \eqref{h_l2_bdd_2} to have
\begin{align}
    & 
    \eqref{L2_energy}_4 = \Big| \int_0^t \int_{\mathbb{R}^3} \int_{\O}  h R  \dd x \dd v \dd s \Big| \notag\\
    & \lesssim T \Vert h\Vert_{L^\infty_{t}L^2_{x,v}}\Vert R\Vert_{L^\infty_{t}L^2_{x,v}}  \lesssim  o(1)\Vert R\Vert_{L^\infty_t L^2_{x,v}} + C_3T\e^{1/2}. \label{source_h}
\end{align}

For $\Gamma(R,R)$, we apply Lemma \ref{lemma:source_estimate} to have
\begin{align}
    & \eqref{L2_energy}_5 =  \Big|\int_0^t\int_{\mathbb{R}^3} \int_{\O} \e^{7/4}\Gamma(R,R) \e^{-1} (\mathbf{I}-\P)R  \dd x \dd v \dd s \Big| \notag\\
    & \lesssim \e^{7/2} \int_0^t \Vert \nu^{-1/2}\Gamma(R,R)\Vert_{L^2_{x,v}}^2 \dd s + o(1)\Vert \e^{-1} (\mathbf{I}-\P)R\Vert_{L^2_{t,x,\nu}}^2  \notag\\
    & \lesssim  T\e^{1/2}\Vert R\Vert_{L^\infty_{t}L^2_{x,v}}^2 \Vert \e^{3/2} wR\Vert_{L^\infty_{t,x,v}}^2 + \e^2 \Vert \e^{-1}(\mathbf{I}-\P)R\Vert_{ L^2_{t,x,v}}^2 \Vert \e^{3/2}wR\Vert_{L^\infty_{t,x,\nu}}^2 \notag\\
    &+ o(1)\Vert \e^{-1} (\mathbf{I}-\P)R\Vert_{L^2_{t,x,\nu}}^2  .   \label{source_gamma}
\end{align}

Plugging the estimate above into \eqref{L2_energy}, with $\e  \ll 1$ and $u_b\ll 1$, we conclude that 
\begin{align}
    & \Vert R(t)\Vert^2_{L^2_{x,v}}  + \Vert \e^{-1} (\mathbf{I}-\P)R\Vert^2_{L^2_{t,x,\nu}} + |\e^{-1/2}(I-P_\gamma)R|^2_{L^2_{t,\gamma_+}} \notag\\
    & \lesssim \Vert R_0\Vert_{L^2_{x,v}}^2  + C_3T\e^{1/2} + T|u_b|^2\Vert R\Vert^2_{L^\infty_t L^2_{x,v}} \notag\\
    &+  [\e^2\Vert \e^{-1}(\mathbf{I}-\P)R\Vert_{L^2_{t,x,\nu}}^2 + T\e^{1/2}\Vert R\Vert_{L^\infty_t L^2_{x,v}}^2] \Vert  \e^{3/2} wR\Vert_{L^\infty_{t,x,v}}^2  .    \label{energy_txv}
\end{align}

Using $\sqrt{T}|u_b|\ll 1$ and $\e \ll 1$, we conclude the lemma. 
\end{proof}

In view of Lemma \ref{lemma:l2}, it remains to estimate the $L^\infty_{t,x,v}$ norm in $\Vert R\Vert_{\infty,t}$.
\begin{lemma}\label{lemma:linfty_R}
Under the a priori assumption \eqref{apriori_assumption}, it holds that
\begin{align}
    & \Vert R\Vert_{\infty,t} \lesssim C_3\e^{2} + \Vert \e^{3/2}w R_0\Vert_{L^\infty_{x,v}}
+ \e\Vert R\Vert_{\infty,t}^2. \label{pointwise_norm_bdd}
\end{align}

\end{lemma}

\begin{proof}
We apply Lemma \ref{lemma:linfty} with $g,q$ given in \eqref{g_nonlinear}, \eqref{q_nonlinear}, so as to have
\begin{align*}
    & \Vert \e^{3/2}wR(t)\Vert_{L^\infty_{x,v}} \lesssim \Vert \e^{3/2}wR_0\Vert_{L^\infty_{x,v}} +   |\e^{3/2}wq|_{L^\infty_{t,x,v}} + \Vert R\Vert_{L^\infty_t L^2_{x,v}}\notag \\
    &+ \e^{5/2} \Vert \langle v\rangle^{-1} w [\e h+\tilde{\mathcal{L}}R + \e^{7/4}\Gamma(R,R)]\Vert_{L^\infty_{t,x,v}}   \notag.
\end{align*}
Applying Lemma \ref{lemma:q_estimate} to $q$, and Lemma \ref{lemma:source_estimate} to the source term $g$, we further have
\begin{align}
&\Vert \e^{3/2}wR(t)\Vert_{L^\infty_{x,v}} \lesssim \Vert \e^{3/2} wR_0\Vert_{L^\infty_{x,v}} + C_3\e^{2}  + o(1)\Vert \e^{3/2}wR\Vert_{L^\infty_{t,x,v}}   \notag\\
    &+   \Vert R\Vert_{L^\infty_t L^2_{x,v}} + \e^{5/2}\Vert \langle v\rangle^{-1}w(\e h+\tilde{\mathcal{L}}(R)+\e^{7/4}\Gamma(R,R)) \Vert_{L^\infty_{t,x,v}} \notag\\
    & \lesssim \Vert \e^{3/2} wR_0\Vert_{L^\infty_{x,v}} + C_3\e^{2} + o(1)\Vert \e^{3/2}wR\Vert_{L^\infty_{t,x,v}} \notag \\
    &+ \Vert R\Vert_{L^\infty_t L^2_{x,v}} + 
  \e^{5/4}  \Vert \e^{3/2} w R\Vert_{L^\infty_{t,x,v}}^2. \label{linfty_step_1}
\end{align}

We conclude the lemma.
\end{proof}

Now we are ready to prove Proposition \ref{prop:norm_bdd}.

\begin{proof}[\textbf{Proof of Proposition \ref{prop:norm_bdd}}]

By adding \eqref{l2_all_bdd} and  $\delta_2 \times \eqref{pointwise_norm_bdd}$ for $\delta_2 \ll 1$, we obtain
\begin{align*}
   \Vert R\Vert_{E,t} + \delta_2 \Vert R\Vert_{\infty,t} &  \lesssim  C_3T\e^{1/2}+ \Vert \e^{3/2}wR_0\Vert_{L^\infty_{x,v}} + \Vert R_0\Vert_{L^2_{x,v}} \\
    &+ \delta_2 \Vert R\Vert_{E,t} +  T\e^{1/2}\Vert R\Vert_{E,t}^2 + T\e^{1/2}\Vert R\Vert_{\infty,t}^2.
\end{align*}
This leads to the conclusion of Proposition \ref{prop:norm_bdd}.
\end{proof}

\subsection{Proof of Theorem \ref{thm:well-posedness}}\label{sec:proof_thm}
For any given $T$, we choose $\e$ to be small enough such that
\begin{align*}
    & T\e^{1/2} \ll o(1)[\Vert \e^{3/2}wR_0\Vert_{L^\infty_{x,v}} + \Vert R_0\Vert_{L^2_{x,v}}] \\
    & T\e^{1/2} [\Vert \e^{3/2}wR_0\Vert_{L^\infty_{x,v}} + \Vert R_0\Vert_{L^2_{x,v}}] \ll 1 .\end{align*}
In the a priori estimate in Proposition \ref{prop:norm_bdd}, $C_3=C_3(\delta,T)$, this implies that there exists $C_4=C_4(\delta,T)$ such that
\begin{align*}
    &     \Vert R\Vert_{E,t} + \Vert R\Vert_{\infty,t}  \\
    &\leq C_4 [T\e^{1/2}+ \Vert \e^{3/2}wR_0\Vert_{L^\infty_{x,v}} + \Vert R_0\Vert_{L^2_{x,v}}  +  T\e^{1/2}\Vert R\Vert_{E,t}^2 + T\e^{1/2}\Vert R\Vert_{\infty,t}^2].
\end{align*}

Assuming the a priori assumption $\Vert R\Vert_{E,t} + \Vert R\Vert_{\infty,t}<2C_4 [\Vert \e^{3/2}wR_0\Vert_{L^\infty_{x,v}} + \Vert R_0\Vert_{L^2_{x,v}}]$, then we arrive at the a priori estimate
\begin{align*}
    &  \Vert R\Vert_{E,t}+\Vert R\Vert_{\infty,t} < C_4 T\e^{1/2} + C_4[\Vert \e^{3/2}wR_0\Vert_{L^\infty_{x,v}} + \Vert R_0\Vert_{L^2_{x,v}}] \\
    &+ 2C_4T\e^{1/2}[\Vert \e^{3/2}wR_0\Vert_{L^\infty_{x,v}} + \Vert R_0\Vert_{L^2_{x,v}}][\Vert R\Vert_{E,t}+\Vert R\Vert_{\infty,t}] \\ 
    & <(C_4+o(1))[\Vert \e^{3/2}wR_0\Vert_{L^\infty_{x,v}} + \Vert R_0\Vert_{L^2_{x,v}}] + o(1)[\Vert R\Vert_{E,t}+\Vert R\Vert_{\infty,t}]  .
\end{align*}
This leads to the same conclusion as the a priori assumption
\begin{align*}
    &  \Vert R\Vert_{E,t}+\Vert R\Vert_{\infty,t} < 2C_4 [\Vert \e^{3/2}wR_0\Vert_{L^\infty_{x,v}} + \Vert R_0\Vert_{L^2_{x,v}}].
\end{align*}
This proves \eqref{R_est} with $C_1 = 2C_4$.

Then it is standard to apply the sequential argument or fixed point theorem to construct a unique solution to \eqref{remainder_eqn} that satisfies \eqref{R_est}. For detailed construction of the well-posedness and positivity, we refer readers to \cite{EGKM2}.

Since the expansion coefficients $f_1,f_i$ in \eqref{f1}, \eqref{f_i} are completely determined by $u$ and the corresponding initial condition $f_{i,0}$, which has a unique solution as the heat equation and linear NSF system in \eqref{u_p} and \eqref{u_theta_i_eqn}, we conclude the existence and uniqueness for the original Boltzmann equation $F$ in \eqref{eqn_F}. The positivity of the solution can also be justified by employing the classical positive-preserving iteration scheme in \cite{EGKM2}.

Finally, the convergence from the kinetic equation to the fluid equation in $\e$ \eqref{convergence} follows from 
\begin{align*}
   & \Big\Vert \frac{F-\mu}{\e\sqrt{\mu}} - f_1\Big\Vert_{L^\infty_t L^2_{x,v}} \leq \e \Vert f_2\Vert_{L^\infty_t L^2_{x,v}} + \e^{3/2+\frac{1}{4}}\Vert R\Vert_{L^\infty_t L^2_{x,v}} \lesssim \e, \\
    &  \Big\Vert \frac{F-\mu}{\e\sqrt{\mu}} - f_1\Big\Vert_{L^\infty_{t,x,v}} \leq \e \Vert f_2\Vert_{L^\infty_{t,x,v}} + \e^{3/2+\frac{1}{4}}\Vert R\Vert_{L^\infty_{t,x,v}} \lesssim \e^{1/4}.
\end{align*}
for any $0\leq t\leq T$. Here we have applied \eqref{R_est} and Lemma \ref{lemma:inter_solution}.

The proof of Theorem \ref{thm:well-posedness} is complete. \qed

\appendix

\section{Hilbert expansion}
The source term in \eqref{u_theta_i_eqn} are given by
\begin{align}
  S_u^{i}  &     = \langle     v\cdot \nabla_x \mathcal{L}^{-1} [\p_t (\mathbf{I}-\mathbf{P})f_{i-1} + (\mathbf{I}-\mathbf{P})(v\cdot \nabla_x (\mathbf{I}-\mathbf{P})f_i)],v\sqrt{\mu}\rangle \notag\\
    & - \langle v\cdot \nabla_x \mathcal{L}^{-1}(2\Gamma(f_1,(\mathbf{I}-\mathbf{P})f_i) + \sum_{j+k=i+1, \ 1<j,k}\Gamma(f_j,f_k)),v\sqrt{\mu}\rangle \notag\\
    & - (\p_t + u\cdot \nabla_x - \kappa \Delta)(I-P_0)u_i - (I-P_0)u_i\cdot \nabla_x u  \notag\\
    &  - (\nabla_x \cdot (I-P_0)u_i)u + \frac{\kappa}{3}\nabla_x (\nabla_x \cdot (I-P_0)u_i)  , \label{S_u}\\
   S_\theta^{i} & = \langle     v\cdot \nabla_x \mathcal{L}^{-1} [\p_t (\mathbf{I}-\mathbf{P})f_{i-1} + (\mathbf{I}-\mathbf{P})(v\cdot \nabla_x (\mathbf{I}-\mathbf{P})f_i)],\frac{|v|^2}{5}\sqrt{\mu}\rangle \notag\\
    & - \langle v\cdot \nabla_x \mathcal{L}^{-1}(2\Gamma(f_1,(\mathbf{I}-\mathbf{P})f_i) + \sum_{j+k=i+1, \ 1<j,k}\Gamma(f_j,f_k)),\frac{|v|^2}{5}\sqrt{\mu}\rangle, \notag\\
    & + \frac{2}{5}\p_t (\rho_i+\theta_i)  \label{S_theta}.
\end{align}

\section{Stationary profile of the Rayleigh problem}\label{sec:stationary}

In case of a finite channel domain $\Omega=\mathbb{T}^2\times (0,1)$ with tangent shear moving on the boundaries, the long-time behavior for the initial boundary value problem on the Boltzmann equation is determined by the corresponding steady solution; see recent study \cite{duan20243d}. However, for the half-space Rayleigh problem under consideration in the current work, it seems impossible to expect the same property. In fact, for conciseness, we may consider the one-dimensional steady Boltzmann equation in the half-line without any scaling:
\begin{align}\label{bvp.eq}
v_2 \p_{y} F = Q(F,F),
\end{align}
where $F=F(y,v)\geq 0$ for $y\in \mathbb{R}^+$, $v = (v_1,v_2,v_3)\in \mathbb{R}^3$. On the boundary $y=0$, we impose the diffuse boundary condition
\begin{align}\label{bvp.bc}
    &  F(0,v)|_{v_2>0} = M(v_1 - \alpha, v_2,v_3) \int_{u_2<0} F(0,u) |u_2| \,\dd u, 
\end{align}
where $M(v)$ is the wall Maxwellian given by
\begin{align*}
    &   M(v_1,v_2,v_3) = \frac{1}{2\pi} \exp\Big(-\frac{v_1^2+v_2^2+v_3^2}{2}\Big),
\end{align*}
and $\alpha>0$ is the velocity of the wall in the $v_1$ direction.

We consider whether or not there exists a stationary profile of the problem where the gas is in the rest equilibrium state in the far field:
\begin{align}\label{ap.inf}
    F(y,v)\to \mu(v) \text{ as } y\to + \infty.
\end{align}
Note that the boundary layer problem \eqref{bvp.eq} and \eqref{bvp.bc} supplemented with the extra far-field condition \eqref{ap.inf} could be overdetermined. 

Indeed, we first find a function $U$ that connects the tangential flow velocity $U(y)$ from $U(0)=1$ to $\lim_{y\to\infty} U(y)=0$. For example, this function $U(y)$ can be constructed in the following way:
\begin{align}\label{def.U}
    & U(y) := \frac{2}{\sqrt{\pi}}\int_{y}^\infty  e^{-r^2} \dd r,\ y\geq 0; \ U(0) = 1, \ \lim_{y\to +\infty} U(y) = 0.
\end{align}

Equivalently we look for steady solutions of the following shear profile:
\begin{align*}
    & F_{st}(y,v_1 - \alpha U(y), v_2,v_3).
\end{align*}
Plugging this into \eqref{bvp.eq} and \eqref{bvp.bc}, we obtain the boundary-value problem on $F_{st}$ as
\begin{align*}
\begin{cases}
        &\displaystyle  v_2 \p_y F_{st}  - \alpha v_2 U'(y) \p_{v_1} F_{st} = Q(F_{st},F_{st}), \ y\in (0,\infty), \ v\in \mathbb{R}^3, \\
    &\displaystyle F_{st}(0,v)|_{v_2>0} = \sqrt{2\pi}\mu \int_{u_2<0} F_{st}(0,u) |u_2| \,\dd u.
\end{cases}
\end{align*}

Assuming that $0<\alpha \ll 1$, we then expand $F_{st}$ in $\alpha$:
\begin{align*}
    & F_{st} = \mu + \sqrt{\mu} (\alpha G_1 + |\alpha|^2 G_R),
\end{align*}
with
\begin{align*}
    & \int_{\mathbb{R}^+}\int_{\mathbb{R}^3} \sqrt{\mu}G_1 \,\dd v \dd y = \int_{\mathbb{R}^+}\int_{\mathbb{R}^3}\sqrt{\mu} G_R \,\dd v \dd y = 0.
\end{align*}

Comparing the order of $\alpha$, we obtain the equation for $G_1$,
\begin{align*}
    & v_2 \p_y G_1 + \mathcal{L} G_1 =  - U'(y) v_1 v_2 \sqrt{\mu},
\end{align*}
with boundary condition
\begin{align*}
       & G_1(0,v)|_{v_2>0} = \sqrt{2\pi \mu(v)} \int_{u_2<0} \sqrt{\mu}G_1(0,u)|u_2| \,\dd u.
\end{align*}
From the oddness in $v_1$ in the equation of $G_1$, this boundary condition reduces to 
\[G_1(0,v)|_{v_2>0} = 0.\]

Note that $G_1$ corresponds to the Milne's problem \cite{bardos1986milne} with a source term. In fact, for this problem we can find an explicit solution:
\begin{align}
    & G_1 = (1-U(y)) v_1 \sqrt{\mu}. \notag 
\end{align}
\hide

\Red{Hi HX: May I ask whether one can modify the boundary condition of $G_1$ as 
\begin{align}\label{G1.mo}
G_1(0,v)|_{v_2>0} = \sqrt{2\pi \mu(v)} \int_{u_2<0} \sqrt{\mu}G_1(0,u)|u_2| \dd u-v_1\sqrt{\mu},
\end{align}
such that $G_1=-U(y)v_1\sqrt{\mu}$? And then, we find a suitable boundary condition to solve the remainder? RJ}
\Blue{In this setting the expansion will become
\begin{align*}
    &  F_{st} = \mu + \sqrt{\mu}(\alpha (G_1 - v_1\sqrt{\mu}) + \alpha^2(G_R + \frac{1}{\alpha}v_1\sqrt{\mu})).
\end{align*}
Then I think this will cause the new remainder $G_R+\frac{1}{\alpha}v_1\sqrt{\mu}$ to have a scale $\frac{1}{\alpha}$ then, I am not sure if this will go back to the same situation.}\Red{We may include a linear boundary layer around the boundary, to take away such boundary inhomogeneous boundary term $-v_1\sqrt{\mu}$ in \eqref{G1.mo} (PS: the linear boundary layer $\cdot(y/\epsilon)$ does not change the equation and the far-field). We may consider such possibility. }

\unhide
By \eqref{def.U} it holds 
$$
\lim_{y\to \infty} G_1(y,v) = v_1 \sqrt{\mu}.
$$ 
Hence, in the leading order, the solution $F_{st}$, if it exists, has to behave as
\begin{align*}
    & F_{st} \sim \mu + \alpha(1-U(y)) v_1 \mu, \\
    & \lim_{y\to \infty} F_{st} \sim \mu(v) + \alpha v_1 \mu(v) \sim \mu(v_1 - \alpha, v_2, v_3).
\end{align*}
This implies that the far field condition \eqref{ap.inf} is not appropriate to ensure the solvability of the boundary-value problem \eqref{bvp.eq} and \eqref{bvp.bc}. Therefore, the gas should move in the same flow velocity as the boundary; in such case, there only exists a trivial stationary solution $\mu(v_1-\alpha,v_2,v_3)$.

\hide

{\color{red}
Difficulty:
\begin{enumerate}
    \item Boundary condition becomes $R = P_\gamma R + O(\e)P_\gamma R + r$, in the test function method, there will be an extra term $O(\e)|P_\gamma R|_{2,+}$.

    \item Need to use Ukai trace theorem to control the extra term, this leads to a control of $O(\e)\int_0^t \Vert R\Vert^2_2$. 

    \item Such control is impossible for $t\to \infty$. With an extra $\e$, we can let the upper bound of $t$ depend on $\e$. 
\end{enumerate}

}

\newpage


For the stationary problem
\begin{align*}
    & v\cdot \nabla_x F = \frac{Q(F,F)}{\e},
\end{align*}
the asymptotic expansion
\begin{align*}
    & F = \mu + \sqrt{\mu}(\e f_1 + \e^2 f_2 + \cdots + \e^k f_k) + \sqrt{\mu} \delta R
\end{align*}
leads 
\begin{align*}
    &    \text{ Order of $\e^0$: } \mathcal{L}f_1 = 0, \label{order_0}\\
    &    \text{ Order of $\e^1$: } v\cdot \nabla_x f_1 + \mathcal{L} f_2 = \Gamma(f_1,f_1),  \\
    &    \text{ Order of $\e^2$: }  + v\cdot \nabla_x f_2 + \mathcal{L} f_3 = \Gamma(f_1,f_2) + \Gamma(f_2,f_1) \\
    & \vdots \notag\\
    & \text{ Order of $\e^{k-1}$: }  v\cdot \nabla_x f_{k-1} + \mathcal{L} f_{k} = \sum_{i+j=k, i,j\geq 1} \big( \Gamma(f_i,f_j) + \Gamma(f_j,f_i) \big).  
\end{align*}

From a standard computation, \eqref{order_1} leads to the incompressibility condition 
\begin{align*}
    & \nabla_x \cdot u_p = 0,
\end{align*}
and \eqref{order_i} leads to the incompressible Navier-Stokes equation:
\begin{align*}
    &  (u_p\cdot \nabla_x) u_p + \nabla_x p_1 = \kappa \Delta u_p,  \\   
    & \nabla_x p_1 = \nabla_x (\rho_2 + \theta_2 - \frac{1}{3}|u_p|^2). \notag
\end{align*}

In general, for order of $\e^{l},\e^{l+1}, 2\leq l\leq k-1$, we denote
\begin{align*}
    \mathbf{P}(f_l) = \big(\rho_l + u_l\cdot v + \theta_l \frac{|v|^2-3}{2} \big) \sqrt{\mu}.
\end{align*}

The conservation law leads to the incompressibility condition, Boussinesq relation and the linear Navier-Stoke-Fourier system:
\begin{align*}
    &  \nabla_x \cdot u_l = 0, \\
    &   (u_p \cdot \nabla_x ) u_l + (u_l\cdot \nabla_x) u_p + \nabla_x p_l - \kappa \Delta u_l = R_l^u,\\
    & R_l^u = \Big\langle    v\cdot \nabla_x \mathcal{L}^{-1} \big( v\cdot \nabla_x (\mathbf{I}-\mathbf{P})f_l \big)  , v\sqrt{\mu}  \Big\rangle \\
    & + \Big\langle    v\cdot \nabla_x \mathcal{L}^{-1} \big( \Gamma(f_1,(\mathbf{I}-\mathbf{P})f_l) + \Gamma((\mathbf{I}-\mathbf{P})f_l,f_1) \big)  , v\sqrt{\mu}  \Big\rangle \\
    & + \Big\langle    v\cdot \nabla_x \mathcal{L}^{-1} \big( \sum_{i+j=l+1,i,j\geq 2}\big[\Gamma(f_i,f_j) + \Gamma(f_j,f_i) \big] \big)  , v\sqrt{\mu}  \Big\rangle 
\end{align*}
\begin{align*}
    &   \nabla_x p_l = \nabla_x (\rho_{l+1}+\theta_{l+1} - \frac{2-\delta_{l1}}{3}(u_l\cdot u_p)), \notag\\
    &   \rho_{l+1}+\theta_{l+1} = p_l + \frac{2-\delta_{l1}}{3} (u_l\cdot u_p)+C_{l+1}, \text{ for some constant }C_{l+1}. 
\end{align*}

Since $\theta_1=0$, the heat flow for higher order is given by
\begin{align*}
    &  u_p \cdot \nabla_x \theta_l - \lambda \Delta \theta_l  =   R_l^\theta \\
    & R_l^\theta = \Big\langle    v\cdot \nabla_x \mathcal{L}^{-1} \big( v\cdot \nabla_x (\mathbf{I}-\mathbf{P})f_l \big)  , \frac{|v|^2}{5}\sqrt{\mu}  \Big\rangle \\
    & + \Big\langle    v\cdot \nabla_x \mathcal{L}^{-1} \big( \Gamma(f_1,(\mathbf{I}-\mathbf{P})f_l) + \Gamma((\mathbf{I}-\mathbf{P})f_l,f_1) \big)  , \frac{|v|^2\sqrt{\mu}}{5}  \Big\rangle \\
    & + \Big\langle    v\cdot \nabla_x \mathcal{L}^{-1} \big( \sum_{i+j=l+1,i,j\geq 2}\big[\Gamma(f_i,f_j) + \Gamma(f_j,f_i) \big] \big)  , \frac{|v|^2\sqrt{\mu}}{5}  \Big\rangle .
\end{align*}

We stop the expansion at $k=3$: $F=\mu+ \sqrt{\mu} (\e f_1 + \e^2 f_2 + \e^2 \mathcal{F}_2 + \e^3 f_3) + \e^2 \sqrt{\mu} R$. The equation of $R$ is given by
\begin{align*}
    v\cdot \nabla_x R + \frac{1}{\e} \mathcal{L}(R) = \e \Gamma(R,R) + S,
\end{align*}
with
\begin{align*}
    &   
\end{align*}
\unhide

\medskip
\noindent {\bf Acknowledgment:}\,
The research of Renjun Duan was partially supported by the General Research Fund (Project No.~14301822) from RGC of Hong Kong and also by the grant from the National Natural Science Foundation of China (Project No.~12425109).

\medskip
\noindent{\bf Data availability:} The manuscript contains no associated data.

\medskip
\noindent{\bf Conflict of Interest:} The authors declare that they have no conflict of interest.

\bibliographystyle{siam}


\end{document}